\theoremstyle{plain}
    \newtheorem{theorem}{Theorem}[section]
    \newtheorem*{theorem*}{Theorem}
    \newtheorem{lemma}[theorem]{Lemma}
    \newtheorem{proposition}[theorem]{Proposition}
    \newtheorem{corollary}[theorem]{Corollary}
\theoremstyle{definition}
    \newtheorem{remark}{Remark}[section]
\numberwithin{equation}{section}
\renewcommand{\r}{\right}
\newcommand{\cleq}{\lesssim}
\newcommand{\ceq}{\approx} 
\newcommand{\eps}{\varepsilon}
\DeclareMathOperator{\im}{Im}
\DeclareMathOperator{\re}{Re}
\DeclareMathOperator{\NLS}{NLS}
\begin{document}

\title[]{Blow-up or Grow-up for the threshold solutions to the nonlinear Schr\"{o}dinger equation}
\author[S. Gustafson]{Stephen Gustafson}
\address[S. Gustafson]{University of British Columbia, 1984 Mathematics Rd., Vancouver, Canada V6T1Z2.}
\email{gustaf@math.ubc.ca}
\author[T. Inui]{Takahisa Inui}
\address[T. Inui]{Department of Mathematics, Graduate School of Science, Osaka University, Toyonaka, Osaka, Japan 560-0043.
\newline 
University of British Columbia, 1984 Mathematics Rd., Vancouver, Canada V6T1Z2.}
\email{inui@math.sci.osaka-u.ac.jp 
}
\date{\today}
\date{\today}
\keywords{nonlinear Schr\"{o}dinger equation, blow-up, grow-up}
\subjclass[2020]{35Q55,35B44 etc.}
\maketitle

\begin{abstract}
We consider the nonlinear Schr\"{o}dinger equation with $L^{2}$-supercritical and $H^{1}$-subcritical power type nonlinearity. Duyckaerts and Roudenko \cite{DuRo10} and Campos, Farah, and Roudenko \cite{CFR20} studied the global dynamics of the solutions with same mass and energy as that of the ground state. In these papers, finite variance is assumed to show the finite time blow-up. In the present paper, we remove the finite-variance assumption and prove a blow-up or grow-up result. 
\end{abstract}

\tableofcontents

\section{Introduction}

\subsection{Background} 

We consider the following nonlinear Schr\"{o}dinger equation:
\begin{align}
\label{NLS}
\tag{NLS}
	\left\{
	\begin{aligned}
		&i \partial_{t}u + \Delta u +|u|^{p-1}u=0, & (t,x) \in I \times \mathbb{R}^{d},
		\\
		&u(0,x)=u_{0}(x), &x \in \mathbb{R}^{d},
	\end{aligned}
	\r.
\end{align}
where $d \in \mathbb{N}$, $1+4/d < p < (d+2)/(d-2)$. Here, we regard $(d+2)/(d-2)$ as $\infty$ when $d=1,2$. The nonlinearity is called $L^{2}$-supercritical and $H^{1}$-subcritical. The initial value problem is locally well-posed in $H^{1}(\mathbb{R}^{d})$ (see \cite{GiVe79} and the standard text books \cite{Caz03,Tao06,LiPo15}). We also know that the blow-up criterion holds. That is, if the forward (resp. backward) maximal existence time $T_{\max}$ (resp. $T_{\min}$) is finite then $\|\nabla u(t)\|_{L^{2}}$ diverges at $t=T_{\max}$ (resp. $t=T_{\min}$). Moreover, the mass $M$, energy $E$, and momentum $P$,
given by 
\begin{align*}
	M(u)&=\|u\|_{L^{2}}^{2},
	\\
	E(u)&=\frac{1}{2}\|\nabla u\|_{L^{2}}^{2} - \frac{1}{p+1}\|u\|_{L^{p+1}}^{p+1},
	\\
	P(u)&=\im \int_{\mathbb{R}^{d}} \overline{u(x)} \nabla u(x) dx
\end{align*}
are conserved by the flow.
In the study of the global dynamics, the ground state solution $e^{it}Q(x)$ plays a crucial role. Here, $Q$ is the least-energy radial, positive solution to the elliptic equation:
\begin{align*}
	-\Delta Q +Q -Q^{p}=0 \text{ in } \mathbb{R}^{d}.
\end{align*}
Since pioneering work by Kenig and Merle \cite{KeMe06}, many researchers have studied \eqref{NLS} from the viewpoint of global dynamics. 
We summarize the global dynamics of solutions below the ground state:
\begin{theorem*}[Global dynamics below the ground state]
Let $u_{0}\in H^{1}(\mathbb{R}^{d})$ satisfy $M(u_{0})^{\frac{1-s_{c}}{s_{c}}}E(u_{0})<M(Q)^{\frac{1-s_{c}}{s_{c}}}E(Q)$, where $s_{c}:=\frac{d}{2} -\frac{2}{p-1}$.
\begin{enumerate}
\item If $\|u_{0}\|_{L^{2}}^{1-s_{c}}\|\nabla u_{0}\|_{L^{2}}^{s_{c}} < \|Q\|_{L^{2}}^{1-s_{c}}\|\nabla Q\|_{L^{2}}^{s_{c}}$, then the solution $u$ exists for $t \in \mathbb{R}$, and scatters in both time directions. 
\item If $\|u_{0}\|_{L^{2}}^{1-s_{c}}\|\nabla u_{0}\|_{L^{2}}^{s_{c}} > \|Q\|_{L^{2}}^{1-s_{c}}\|\nabla Q\|_{L^{2}}^{s_{c}}$, then the solution $u$ blows up or grows up in both time directions.
\end{enumerate}
\end{theorem*}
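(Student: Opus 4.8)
The plan is to treat both regimes by a common variational ``trapping'' step and then branch: concentration--compactness plus rigidity for scattering in case (1), and a localized virial (convexity) argument for case (2). For the trapping, recall the sharp Gagliardo--Nirenberg inequality $\|f\|_{L^{p+1}}^{p+1}\le C_{GN}\|\nabla f\|_{L^{2}}^{\frac{d(p-1)}{2}}\|f\|_{L^{2}}^{\,p+1-\frac{d(p-1)}{2}}$, whose optimizer is $Q$, which gives $E(u)\ge g(\|\nabla u\|_{L^{2}})$, where after scaling out the conserved mass $g(y)=\frac12 y^{2}-a\,y^{\alpha}$ with $\alpha=\frac{d(p-1)}{2}>2$ and $a=a(M(u))>0$. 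This $g$ has a unique interior maximum, located (after the natural normalization) at a point tied to $\|\nabla Q\|_{L^{2}}$ and with maximal value tied to $M(Q)^{\frac{1-s_{c}}{s_{c}}}E(Q)$. Hence the hypothesis $M(u_{0})^{\frac{1-s_{c}}{s_{c}}}E(u_{0})<M(Q)^{\frac{1-s_{c}}{s_{c}}}E(Q)$ says exactly that the conserved energy lies strictly below the barrier height; since $t\mapsto\|\nabla u(t)\|_{L^{2}}$ is continuous and $M,E$ are conserved, $\|u(t)\|_{L^{2}}^{1-s_{c}}\|\nabla u(t)\|_{L^{2}}^{s_{c}}$ cannot cross the barrier location $\|Q\|_{L^{2}}^{1-s_{c}}\|\nabla Q\|_{L^{2}}^{s_{c}}$: it stays strictly on the side it starts on, uniformly in $t$ by an amount $\eta>0$ controlled by the mass--energy gap. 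Set $K(u):=\|\nabla u\|_{L^{2}}^{2}-\frac{d(p-1)}{2(p+1)}\|u\|_{L^{p+1}}^{p+1}$, so that $\frac{d^{2}}{dt^{2}}\int|x|^{2}|u(t)|^{2}\,dx=8K(u(t))$, and note $K(u)=-c_{1}\|\nabla u\|_{L^{2}}^{2}+c_{2}E(u)$ with $c_{1},c_{2}>0$ since $p>1+4/d$; thus on the sub-barrier side $K$ is bounded below by a positive constant, and on the super-barrier side $K$ is bounded above by a negative constant.

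\textbf{Case (1).} Global existence follows at once from $\sup_{t}\|\nabla u(t)\|_{L^{2}}<\infty$ and the blow-up criterion. For scattering I would run the Kenig--Merle scheme: a perturbation/stability theory together with a linear profile decomposition in $\dot H^{s_{c}}$-critical Strichartz spaces; if scattering fails, extract a critical mass--energy solution whose trajectory is precompact in $H^{1}$ modulo spatial translations $x\mapsto x-x(t)$ (no scaling parameter is needed, since the flow lives in a bounded ball of $H^{1}$); then rigidity: localize using almost-periodicity and feed the trapping-positivity $K(u(t))\gtrsim\eta'>0$ into a localized virial/Morawetz identity to force the critical element to vanish, a contradiction. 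Alternatively one may avoid concentration--compactness by a Dodson--Murphy interaction-Morawetz argument; for non-radial data this requires treating the translation parameter, or a preliminary Galilean boost to $P(u_{0})=0$.

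\textbf{Case (2).} The trapping now gives $K(u(t))\le-\delta<0$ throughout the maximal interval. If $u_{0}$ has finite variance, Glassey's computation $\frac{d^{2}}{dt^{2}}\int|x|^{2}|u(t)|^{2}\,dx=8K(u(t))\le-8\delta$ forces the variance to reach $0$ in finite time, hence finite-time blow-up. Without finite variance, argue by contradiction: suppose the solution is forward global with $\sup_{t\ge0}\|\nabla u(t)\|_{L^{2}}<\infty$, and use the truncated virial $z_{R}(t)=\int\varphi_{R}(x)|u(t,x)|^{2}\,dx$, $\varphi_{R}(x)=R^{2}\varphi(x/R)$, $\varphi$ radial with $\varphi(r)=r^{2}$ near $0$, $\varphi'\ge0$, $\varphi''\le2$. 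A direct computation yields $z_{R}''(t)\le 8K(u(t))+CR^{-2}M(u)+C\int_{|x|\gtrsim R}|u(t,x)|^{p+1}\,dx$, the gradient correction having the favorable sign; the last tail is $o(1)$ as $R\to\infty$ by the radial Sobolev (Strauss) inequality together with $\sup_{t}\|\nabla u(t)\|_{L^{2}}<\infty$ and mass conservation (for non-radial data one instead centers $\varphi_{R}$ at the moving center of mass, using the momentum). Fixing $R$ large gives $z_{R}''(t)\le-4\delta$ for all $t\ge0$, so $z_{R}(t)\to-\infty$, contradicting $z_{R}\ge0$. Hence either $T_{\max}<\infty$ (blow-up) or $T_{\max}=\infty$ with $\limsup_{t\to\infty}\|\nabla u(t)\|_{L^{2}}=\infty$ (grow-up); the backward direction is identical.

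\textbf{Main obstacle.} In case (1) the delicate point is the rigidity step, where one must secure precompactness of the critical element's trajectory uniformly in the translation parameter and then extract a contradiction from the localized virial identity. In case (2) it is controlling the remainder in the truncated virial in the absence of finite variance -- in particular the nonlinear tail $\int_{|x|>R}|u|^{p+1}$ -- and, for non-radial data, the unavailability of the radial Sobolev inequality, which is precisely what forces a momentum normalization or a more careful moving-center argument.
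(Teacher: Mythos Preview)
First, note that the paper does not give its own proof of this theorem: it is stated as background in the introduction and attributed to \cite{HoRo08,DHR08,HoRo10,AkNa13,Gue14,FXC11,Guo16,DoMu17,DoMu18}. So there is no proof in the paper to match against; your sketch should be judged against those references and on its own merits.

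Your variational trapping and your Case~(1) outline are the standard Kenig--Merle road map and are correct at the level of detail you give, in line with \cite{DHR08,FXC11,AkNa13,Gue14}. (A small quibble: the identity $K(u)=-c_{1}\|\nabla u\|_{L^{2}}^{2}+c_{2}E(u)$ yields $K<0$ on the super-barrier side directly, but on the sub-barrier side one really needs the sharp Gagliardo--Nirenberg inequality, not this linear relation, to get $K>\delta>0$; your conclusion is right but the mechanism you name is not.)

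The genuine gap is in Case~(2) for non-radial, infinite-variance data. You propose to ``center $\varphi_{R}$ at the moving center of mass, using the momentum,'' but the center of mass $M(u)^{-1}\int x|u(t,x)|^{2}\,dx$ is only defined when $xu\in L^{2}$, which is precisely the hypothesis you are trying to drop; a Galilean boost to $P(u_{0})=0$ does not help, since it would make the center of mass constant only if it were already finite. The tail $\int_{|x|>R}|u|^{p+1}$ is not uniformly small in $t$ without radial Sobolev, and nothing in your argument locates the mass. The literature closes this gap in two different ways. One route (\cite{HoRo10,Gue14,Guo16}) assumes forward global existence with $\sup_{t>0}\|\nabla u(t)\|_{L^{2}}<\infty$ and runs the \emph{same} concentration--compactness machinery you invoke in Case~(1) to produce a path $x(t)$ with $\{u(t,\cdot+x(t)):t>0\}$ precompact in $H^{1}$; the localized virial is then centered at $x(t)$, precompactness kills the tail error uniformly, and a separate estimate $|x(t)|=o(t)$ controls the growth of $J_{R}'$. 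The other route (\cite{AkNa13,DWZ16}) avoids locating the mass entirely by a sharper algebraic treatment of the localized virial remainder that uses only $\sup_{t}\|\nabla u(t)\|_{L^{2}}<\infty$ together with the strict negativity $\sup_{t}K(u(t))\le-\delta$. Your proposal invokes neither mechanism, so as written it does not cover the non-radial, infinite-variance case.
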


\begin{remark}
\begin{enumerate}
\item $\|u_{0}\|_{L^{2}}^{1-s_{c}}\|\nabla u_{0}\|_{L^{2}}^{s_{c}} = \|Q\|_{L^{2}}^{1-s_{c}}\|\nabla Q\|_{L^{2}}^{s_{c}}$ does not occur since we assume $M(u_{0})^{\frac{1-s_{c}}{s_{c}}}E(u_{0})<M(Q)^{\frac{1-s_{c}}{s_{c}}}E(Q)$. 
\item We say that $u$ grows up in positive time if $u$ exists at least on $[0,\infty)$ and $\limsup_{t \to \infty}\|\nabla u(t)\|_{L^{2}}=\infty$. 
\item In the second statement, we have four possibilities. For example, $u$ blows up in finite positive time and grows up in the negative time direction. 
\item Martel and Rapha\"{e}l \cite{MaRa18} showed existence of a grow-up solution for the $L^{2}$-critical NLS on $\mathbb{R}^{2}$ (i.e., $p=3$). However, it is still an open problem whether a grow-up solution to \eqref{NLS} exists. 
\end{enumerate}
\end{remark}

Holmer and Roudenko \cite{HoRo08} showed the scattering part for 3d cubic NLS in the radial setting. The radial assumption was removed by Duyckaerts, Holmer, and Roudenko \cite{DHR08}. The blow-up or grow-up part for 3d cubic NLS is obtained by Holmer and Roudenko \cite{HoRo10}. 
Akahori and Nawa \cite{AkNa13} and Guevara \cite{Gue14} proved both scattering and blow-up or grow-up results for \eqref{NLS} (i.e. for general power and dimension). Fang, Xie, and Cazenave \cite{FXC11} showed the scattering for \eqref{NLS}. Guo \cite{Guo16} obtained the blow-up or grow-up result for \eqref{NLS}. Their proofs of scattering rely on a concentration compactness and rigidity argument, which was developed by Kenig and Merle \cite{KeMe06}. Dodson and Murphy \cite{DoMu17} give another proof of the scattering part for 3d cubic NLS in the radial setting. They used an interaction Morawetz estimate and Tao's scattering criterion (\cite{Tao04}). They removed the radial assumption in \cite{DoMu18}. See also \cite{ADM20,GuSh20}. 

While the global behavior of solutions below the ground state is of only two types, i.e., scattering or blow-up (or grow-up), another global behavior appears in the threshold case -- there exist solutions converging to the ground state:
\begin{theorem*}[Existence of special solutions on the threshold]
There exist two radial solutions $Q^{\pm}(t)$ to \eqref{NLS} in $H^{1}(\mathbb{R}^{d})$ such that 
\begin{itemize}
\item $M(Q^{\pm})=M(Q)$, $E(Q^{\pm})=E(Q)$, $Q^{\pm}$ exist at least on $[0,\infty)$, and there exist $C,c>0$ such that 
\begin{align*}
	\|Q^{\pm}(t) - e^{it}Q\|_{H^{1}}\leq C e^{-ct} 
\end{align*}
for $t\geq 0$,
\item $\|\nabla Q^{+}(0)\|_{L^{2}} > \|\nabla Q\|_{L^{2}}$ and $Q^{+}$ blows up in finite negative time,
\item $\|\nabla Q^{-}(0)\|_{L^{2}} < \|\nabla Q\|_{L^{2}}$ and $Q^{-}$ exists globally and scatters backward in time.
\end{itemize}
\end{theorem*}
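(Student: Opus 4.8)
The plan is to construct $Q^{\pm}$ as a perturbation of the soliton $e^{it}Q$, following the perturbative scheme of \cite{DuRo10} and \cite{CFR20}. Writing $u(t)=e^{it}(Q+h(t))$ and linearizing \eqref{NLS} produces the evolution $\partial_{t}h=\mathcal{L}h$, where, in terms of the real and imaginary parts, $\mathcal{L}$ is built out of $\mathcal{L}_{+}=-\Delta+1-pQ^{p-1}$ and $\mathcal{L}_{-}=-\Delta+1-Q^{p-1}$. The first step is the spectral analysis of $\mathcal{L}$ on radial functions: its generalized kernel is two-dimensional, spanned by the phase direction $iQ$ (since $\mathcal{L}_{-}Q=0$) and the scaling direction $\rho:=\partial_{\omega}\big(\omega^{1/(p-1)}Q(\sqrt{\omega}\,\cdot)\big)\big|_{\omega=1}$ (for which $\mathcal{L}_{+}\rho$ is a multiple of $Q$), forming a single Jordan block; apart from this, $\mathcal{L}$ has exactly one pair of real eigenvalues $\pm e_{0}$ with $e_{0}>0$ and exponentially decaying eigenfunctions $\mathcal{Y}_{\pm}$, and the remainder of the spectrum is purely imaginary. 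On the orthogonal complement of $\mathcal{Y}_{\pm}$ and the generalized kernel there is a coercivity estimate; because \eqref{NLS} is $L^{2}$-supercritical and hence possesses no conserved coercive energy, this coercivity must later be supplemented by a localized virial correction.

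Next I would build approximate solutions: for a parameter $A\in\R$ and an integer $k$, set $Q^{A,k}(t)=e^{it}\bigl(Q+\sum_{j=1}^{k}A^{j}e^{-je_{0}t}Z_{j}\bigr)$ with $Z_{1}=\mathcal{Y}_{+}$ and, for $j\geq 2$, $Z_{j}$ the unique exponentially decaying solution of $(\mathcal{L}+je_{0})Z_{j}=F_{j}$, where $F_{j}$ is the contribution at order $e^{-je_{0}t}$ obtained from expanding $|u|^{p-1}u$ in $h$; this is solvable because $je_{0}$ with $j\geq 2$ lies in the resolvent set of $-\mathcal{L}$. A direct computation then gives $\norm{i\partial_{t}Q^{A,k}+\Delta Q^{A,k}+|Q^{A,k}|^{p-1}Q^{A,k}}_{H^{1}}\lesssim e^{-(k+1)e_{0}t}$ for $t$ large, uniformly for $|A|\leq 1$, and each $Z_{j}$ decays exponentially in $x$, so $Q^{A,k}(t)$ has finite variance.

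For $k$ large I would then upgrade $Q^{A,k}$ to an exact solution $u_{A}$ with $\norm{u_{A}(t)-Q^{A,k}(t)}_{H^{1}}\lesssim e^{-(k+\frac12)e_{0}t}$. Writing $u_{A}=Q^{A,k}+w$, decompose $w$ into its unstable component along $\mathcal{Y}_{+}$, its neutral component in the generalized kernel, and a remainder: the neutral component is killed by modulating the phase and scaling parameters (whose derivatives are then controlled by the size of $w$ and of the unstable coefficient), the remainder is estimated via a Lyapunov functional combining the coercive quadratic form with a localized virial term, and the unstable coefficient $a_{+}(t)$, which satisfies $\dot a_{+}=e_{0}a_{+}+(\text{lower order})$, is handled by a topological shooting argument: among data posed at a sequence of times $T_{n}\to\infty$ one selects, by the intermediate value theorem, a value for which $a_{+}\to 0$, and then passes to the limit. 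Running the construction in a variance-weighted space keeps $u_{A}$ of finite variance; the local theory together with $u_{A}(t)\to e^{it}Q$ extends it to $[0,\infty)$. Fixing a small $A>0$ and a normalization of $\mathcal{Y}_{+}$, set $Q^{+}:=u_{A}$ and $Q^{-}:=u_{-A}$; by construction $M(Q^{\pm})=M(Q)$, $E(Q^{\pm})=E(Q)$, and $\norm{Q^{\pm}(t)-e^{it}Q}_{H^{1}}\lesssim e^{-e_{0}t}$ for $t\geq 0$.

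It remains to read off the backward dynamics. To leading order $\norm{\nabla Q^{A}(t)}_{L^{2}}^{2}-\norm{\nabla Q}_{L^{2}}^{2}$ is a nonzero multiple of $Ae^{-e_{0}t}$ as $t\to\infty$ (if this pairing degenerates, use the conserved quantities and the next order as in \cite{DuRo10,CFR20}), so the two signs are opposite for $Q^{+}$ and $Q^{-}$ near $+\infty$; a continuity argument on the threshold — using that equality in the Gagliardo--Nirenberg inequality, together with $M=M(Q)$ and $E=E(Q)$, forces the soliton — shows that $\norm{\nabla Q^{\pm}(t)}_{L^{2}}-\norm{\nabla Q}_{L^{2}}$ never vanishes, whence $\norm{\nabla Q^{+}(t)}_{L^{2}}>\norm{\nabla Q}_{L^{2}}$ and $\norm{\nabla Q^{-}(t)}_{L^{2}}<\norm{\nabla Q}_{L^{2}}$ for all $t$. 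For $Q^{-}$ the blow-up criterion then gives global existence, and backward scattering follows from the concentration-compactness/rigidity argument on the threshold. For $Q^{+}$, let $V(t)=\norm{|x|Q^{+}(t)}_{L^{2}}^{2}$, finite by the construction; the virial identity gives $V''(t)=-2(d(p-1)-4)\bigl(\norm{\nabla Q^{+}(t)}_{L^{2}}^{2}-\norm{\nabla Q}_{L^{2}}^{2}\bigr)<0$, so $V$ is strictly concave, nondecreasing (else $Q^{+}$ blows up forward, contradicting global forward existence) and not affine (else $Q^{+}$ is a soliton), hence $V'>0$ somewhere, and strict concavity forces $V$ to reach $0$ at a finite negative time: $Q^{+}$ blows up in finite negative time. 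The main obstacle is the third step — running the modulation, the Lyapunov/dispersive estimate and the topological selection simultaneously in the absence of a coercive conservation law, which is exactly where the localized virial correction enters; a secondary point is propagating the finite-variance information through the fixed point, since that (and not merely the threshold dynamics) is what upgrades blow-up-or-grow-up to genuine finite-time blow-up for $Q^{+}$.
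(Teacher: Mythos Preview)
The paper does not give its own proof of this statement: it is quoted in the introduction as a known result and attributed to \cite{DuRo10} (for $d=3$, $p=3$) and \cite{CFR20} (general $d,p$). There is therefore nothing in the present paper to compare your argument against; your sketch is to be measured against those references, and it follows their scheme in outline.

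A few points where your sketch departs from what is actually done in \cite{DuRo10,CFR20}. First, the exact solution $u_{A}$ is obtained from the approximate profile $Q^{A,k}$ by a straight contraction mapping in Strichartz spaces on $[T,\infty)$ for $T$ large, not by a shooting/intermediate-value argument: since one builds the solution forward in time along the \emph{decaying} eigendirection, the coefficient of the growing mode is automatically forced to zero by the decay ansatz, and no topological selection is required. Second, for the construction of $Q^{\pm}$ itself, \cite{DuRo10,CFR20} do not modulate phase/scaling nor invoke a localized-virial Lyapunov correction; those tools enter only later, in the classification of general threshold solutions. The fixed point for $Q^{\pm}$ is purely perturbative in Strichartz norms once $k$ is taken large enough. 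Third, the backward scattering of $Q^{-}$ is not read off directly from a concentration-compactness/rigidity step; in \cite{DuRo10,CFR20} it is deduced \emph{a posteriori} from the full threshold classification (if $Q^{-}$ failed to scatter backward, the classification in negative time would force it to coincide with $Q^{-}$ again up to symmetry, yielding a contradiction with the exponential approach to $e^{it}Q$ forward in time). Your virial argument for the finite-time negative blow-up of $Q^{+}$, using finite variance and strict concavity of $t\mapsto\|xQ^{+}(t)\|_{L^{2}}^{2}$, is correct and is exactly the argument in \cite{DuRo10}.
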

Moreover, we know the global dynamics on the threshold:
\begin{theorem*}[Global dynamics of threshold solutions]
Let $u_{0}\in H^{1}(\mathbb{R}^{d})$ satisfy $M(u_{0})^{\frac{1-s_{c}}{s_{c}}}E(u_{0})=M(Q)^{\frac{1-s_{c}}{s_{c}}}E(Q)$.
\begin{enumerate}
\item If $\|u_{0}\|_{L^{2}}^{1-s_{c}}\|\nabla u_{0}\|_{L^{2}}^{s_{c}} < \|Q\|_{L^{2}}^{1-s_{c}}\|\nabla Q\|_{L^{2}}^{s_{c}}$, then the solution $u$ exists on $\mathbb{R}$. Moreover, either $u$ scatters in both directions or $u=Q^{-}$ up to symmetries of the equation. 
\item If $\|u_{0}\|_{L^{2}}^{1-s_{c}}\|\nabla u_{0}\|_{L^{2}}^{s_{c}} = \|Q\|_{L^{2}}^{1-s_{c}}\|\nabla Q\|_{L^{2}}^{s_{c}}$, then $u=e^{it}Q$  up to symmetries of the equation. 
\item We assume that $u_{0}$ is radial when $d\geq 2$ or $|x|u_{0} \in L^{2}$. If $\|u_{0}\|_{L^{2}}^{1-s_{c}}\|\nabla u_{0}\|_{L^{2}}^{s_{c}} > \|Q\|_{L^{2}}^{1-s_{c}}\|\nabla Q\|_{L^{2}}^{s_{c}}$, then either the solution $u$ blows up in finite positive and negative time or $u=Q^{+}$ up to symmetries of the equation. 
\end{enumerate}
\end{theorem*}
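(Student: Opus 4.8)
The plan is to run the concentration--compactness-and-rigidity machinery of Duyckaerts--Merle, in the mass--energy--scaling formulation developed by Duyckaerts--Roudenko and Campos--Farah--Roudenko, together with the spectral and modulation analysis near the soliton orbit $e^{it}Q$. The first observation is variational: writing $g(t):=\|u(t)\|_{L^{2}}^{1-s_{c}}\|\nabla u(t)\|_{L^{2}}^{s_{c}}$ and $g(Q):=\|Q\|_{L^{2}}^{1-s_{c}}\|\nabla Q\|_{L^{2}}^{s_{c}}$, the sharp Gagliardo--Nirenberg inequality and the Pohozaev identities for $Q$ show that, under the threshold relation $M(u_{0})^{(1-s_{c})/s_{c}}E(u_{0})=M(Q)^{(1-s_{c})/s_{c}}E(Q)$, if $g(t_{0})=g(Q)$ at some time $t_{0}$ then equality holds in Gagliardo--Nirenberg for $u(t_{0},\cdot)$, so $u(t_{0},\cdot)$ is a phase rotation, translation and rescaling of $Q$ (the scale fixed by the mass) and, by uniqueness for \eqref{NLS}, $u=e^{it}Q$ up to the symmetries of the equation --- this is part~(2). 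Consequently, for any other threshold solution $g(t)-g(Q)$ has a fixed sign throughout the interval of existence, so the three alternatives are mutually exclusive and flow-invariant, and parts~(1) and~(3) may be treated separately.

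For parts~(1) and~(3) one argues by contradiction, assuming that $u$ does not scatter forward in time, respectively does not blow up in finite positive time. In the super-threshold case, under the radial or finite-variance hypothesis the (localized) virial identity $\frac{d^{2}}{dt^{2}}\int|x|^{2}|u|^{2}\,dx=8\|\nabla u\|_{L^{2}}^{2}-\frac{4d(p-1)}{p+1}\|u\|_{L^{p+1}}^{p+1}$, together with the threshold relation between the energy and the kinetic energy, is strictly negative and bounded away from $0$ whenever $g(t)$ stays bounded away from $g(Q)$ from above; since $g(t)$ cannot cross $g(Q)$, a solution that is global in positive time is forced to let $g(t)$ accumulate at $g(Q)$, so grow-up is excluded. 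In both cases, a profile decomposition combined with the already-known sub-threshold dichotomy (scattering below, blow-up above the ground state) produces a nonzero solution whose forward-in-time trajectory is precompact in $H^{1}$ modulo translations and phases, and the variational rigidity of $Q$ upgrades this to $\inf_{\theta,y}\|e^{i\theta}u(t,\cdot+y)-Q\|_{H^{1}}\to0$, first along a time sequence and then --- after ruling out escape of the translation parameter to infinity --- for all sufficiently large $t$.

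Inside this tube around the $Q$-orbit one performs the modulation decomposition $u(t)=e^{i\theta(t)}\big(Q+\epsilon(t)\big)(\cdot-y(t))$ with orthogonality conditions adapted to the linearization $\mathcal{L}_{+}\mathcal{L}_{-}$ about $Q$, where $\mathcal{L}_{+}=-\Delta+1-pQ^{p-1}$ and $\mathcal{L}_{-}=-\Delta+1-Q^{p-1}$; its spectrum consists of $0$ on the symmetry modes, a single pair $\pm e_{0}$ of simple real eigenvalues, and an otherwise positive part. Using the conservation laws to control the neutral and quadratically dangerous directions, one derives a differential inequality $\frac{d}{dt}d(t)\le -c\,d(t)+(\text{higher order})$ for a distance $d(t)\approx\|\epsilon(t)\|_{H^{1}}$, hence $\|u(t)-e^{it}Q\|_{H^{1}}\le Ce^{-ct}$ modulo symmetries as $t\to\infty$. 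A contraction-mapping construction on the one-dimensional stable, respectively unstable, manifold of $\mathcal{L}_{+}\mathcal{L}_{-}$ then shows that an exponentially convergent solution is unique up to symmetries; these special solutions are exactly $Q^{-}$ on the side $g<g(Q)$ and $Q^{+}$ on the side $g>g(Q)$. Feeding in the known behavior of $Q^{\pm}$ (backward scattering of $Q^{-}$; blow-up in finite negative time for $Q^{+}$) closes the trichotomy in each of the three cases.

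I expect the main obstacle to be this compactness step in the super-threshold case: ruling out grow-up --- which is precisely where the radial or finite-variance hypothesis is needed, the (localized) virial identity being the only known device for converting ``$u$ does not blow up in finite time'' into ``$u$ is global with compact orbit'' --- and then extracting the critical element and controlling the translation parameter $y(t)$ so that the trajectory genuinely remains in a small $H^{1}$-neighbourhood of the $Q$-orbit rather than merely along a time sequence; this leans essentially on the variational rigidity of $Q$ and is more delicate in the non-radial finite-variance regime. By contrast, the spectral and modulation analysis of the third paragraph, though technically heavy, follows established templates, and the uniqueness of $Q^{\pm}$ is routine once the exponential decay is in hand.
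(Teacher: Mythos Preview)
The paper does not prove this theorem. It is stated in the Introduction as a summary of prior work --- Duyckaerts--Roudenko \cite{DuRo10} for the 3d cubic case and Campos--Farah--Roudenko \cite{CFR20} in general --- and is quoted precisely as background against which the paper's own contribution (Theorem~\ref{thm1.1}) is to be measured. So there is no ``paper's own proof'' to compare your sketch to.

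That said, your outline is a faithful high-level summary of the strategy in \cite{DuRo10,CFR20}: the variational/Gagliardo--Nirenberg characterization for part~(2); profile decomposition plus the sub-threshold dichotomy to extract a compact critical element in parts~(1) and~(3); the (localized) virial identity, with the radial or finite-variance hypothesis, to force $K(u(t_n))\to 0$ along a sequence and rule out the bad scenarios in part~(3); modulation analysis near the $Q$-orbit and the spectral decomposition of the linearized operator to obtain exponential convergence; and finally the uniqueness of solutions converging exponentially to $e^{it}Q$, which pins down $Q^{\pm}$. One small wobble: your sentence ``so grow-up is excluded'' compresses a nontrivial step --- having $g(t_n)\to g(Q)$ along a sequence does not by itself bound $\|\nabla u(t)\|_{L^2}$ for all $t$; in \cite{DuRo10,CFR20} this is handled either via the exact virial (finite variance) or via a more careful localized-virial/radial-Sobolev argument.

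For context, the paper's own Theorem~\ref{thm1.1} runs a related but logically distinct route: it \emph{assumes} the solution is global forward and uniformly bounded (i.e.\ neither blows up nor grows up), uses profile decomposition to get compactness of the orbit (Proposition~\ref{prop5.1}), runs the modulation argument to get exponential convergence to $Q$, then uses this convergence (rather than a radial/finite-variance hypothesis) to deduce a posteriori that $u$ has finite variance (Lemma~\ref{lem5.12}), and finally \emph{invokes} part~(3) of the theorem you are sketching to conclude $u=Q^{+}$.
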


\begin{remark}
The case $d=1$ with radial (even) symmetry is not 
included in part (3). 
\end{remark}

These result were shown by Duyckaerts and Roudenko \cite{DuRo10} for 3d cubic NLS. Recently, these were extended to general dimensions and powers by Campos, Farah, and Roudenko \cite{CFR20}. Global dynamics above the ground state were studied by Nakanishi and Schlag \cite{NaSc12}, but we do not pursue that 
direction here.

We are interested in the blow-up result on the threshold. While the blow-up or grow-up result without finite variance is known below the ground state, the result without finite variance (or radial symmetry) was not known at the threshold. In the present paper we remove this additional assumption.

\subsection{Main result}

We define the virial functional $K$ by
\begin{align*}
	K(\varphi):= \|\nabla \varphi\|_{L^{2}}^{2} - \frac{d(p-1)}{2(p+1)}\|\varphi\|_{L^{p+1}}^{p+1}.
\end{align*}

\begin{theorem}
\label{thm1.1}
Let $u$ be a solution to \eqref{NLS} satisfying
\begin{align}
\label{MEN}
\tag{MEN}
	M(u_{0})=M(Q), \; E(u_{0})=E(Q), \; K(u_{0})<0.
\end{align}
Then $u$ must satisfy one of the following:
\begin{enumerate}
\item $u$ blows up in both time directions,
\item $u$ blows up in positive time and grows up in negative time,
\item $u$ blows up in negative time and grows up in positive time,
\item $u$ grows up in both time directions,
\item $u=Q^{+}$ up to symmetries.
\end{enumerate}
\end{theorem}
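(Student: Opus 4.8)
The plan is to adapt the Duyckaerts--Roudenko / Campos--Farah--Roudenko classification scheme to the infinite-variance setting, replacing the rigidity argument based on the classical virial identity by one based on a localized (truncated) virial identity, in the spirit of Ogawa--Tsutsumi and the modulation analysis used for threshold solutions. First I would recall the structure of the energy surface at the threshold: since $M(u_0)=M(Q)$ and $E(u_0)=E(Q)$, the variational characterization of $Q$ forces a dichotomy governed by the sign of $K$, and the condition $K(u_0)<0$ is propagated by the flow as long as the solution is ``outside'' the ground state tube. More precisely, I would first establish a \emph{uniform coercivity/ejection} statement: either the solution converges (after applying the symmetries of the equation --- scaling, phase, translation, and Galilean boost) to $e^{it}Q$ as $t\to+\infty$ (or $t\to-\infty$), in which case one is in the realm of the uniqueness result and should conclude $u=Q^+$ (here is where one needs that $Q^+$ is the unique threshold solution with $K<0$ that does not blow up on one side --- this is exactly the content of the classification theorem quoted above, but that theorem assumes finite variance or radiality, so I must instead \emph{reprove} the one-sided convergence $\Rightarrow$ $u=Q^\pm$ implication without those hypotheses), or the solution stays a fixed distance $\delta_0>0$ from the ground-state tube for all $t$ in a half-line, and in that case I want to show it blows up or grows up.

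The heart of the argument is the second alternative. Suppose $u$ exists on $[0,\infty)$ and $\inf_{t\ge 0}\mathrm{dist}_{H^1}(u(t),\{e^{i\theta}Q(\cdot-y)\})\ge \delta_0$; I must show $\limsup_{t\to\infty}\|\nabla u(t)\|_{L^2}=\infty$ (grow-up), which together with the blow-up criterion gives cases (1)--(4). The tool is the localized virial quantity $V_R(t)=\int \chi_R(x)|u(t,x)|^2\,dx$ with $\chi_R(x)=R^2\chi(x/R)$, $\chi$ a smooth radial cutoff with $\chi(r)=r^2$ near $0$. A standard computation gives $V_R''(t) = 4K(u(t)) + \text{(error terms localized to }|x|\gtrsim R)$. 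The key new ingredient, exactly as below the ground state, is a \emph{quantitative} lower bound: using $M(u)=M(Q)$, $E(u)=E(Q)$, $K(u_0)<0$, together with the variational structure (the second-order information about $Q$ as a constrained minimizer), one shows there exists $\eta=\eta(\delta_0)>0$ with $K(u(t))\le -\eta\,\|\nabla u(t)\|_{L^2}^2$ (or at least $K(u(t))\le -\eta\min(1,\|\nabla u(t)\|^2)$ and $K(u(t))\le -\eta$) as long as $u(t)$ stays outside the tube. The error terms in $V_R''$ are controlled by $o_R(1)(1+\|\nabla u(t)\|_{L^2}^2)$ in the radial case by Strauss-type decay, and in the general (non-radial) case --- which is the genuinely new situation here --- one must additionally exploit a Galilean transformation to reduce to $P(u)=0$ and then control the momentum-type error terms, or alternatively work with a virial functional twisted by the momentum density as in Martel or Duyckaerts--Roudenko. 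Combining, for $R$ large one gets $V_R''(t)\le -\eta\,\|\nabla u(t)\|_{L^2}^2 + CR^{-2}\le -\tfrac{\eta}{2}\|\nabla u(t)\|_{L^2}^2$ whenever $\|\nabla u(t)\|_{L^2}$ is not too small; a convexity/ODE argument on $V_R$ then shows $V_R$ cannot stay bounded and positive, forcing $\|\nabla u(t)\|_{L^2}\to\infty$ along a sequence, i.e.\ grow-up (and finite-time blow-up when the classical virial applies, e.g.\ finite variance).

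The remaining piece is the rigidity/uniqueness step: if the solution does \emph{not} stay outside a $\delta_0$-tube for all large $t$, then along a sequence $t_n\to\infty$ it approaches the ground-state manifold, and I must upgrade this to exponential convergence $\|u(t)-e^{it}Q\|_{H^1}\lesssim e^{-ct}$ modulo symmetries and then invoke uniqueness of the special solution $Q^+$. This follows the modulation-theoretic analysis around $Q$: decompose $u(t)=e^{i\theta(t)}(Q+\epsilon(t))(\cdot-y(t))$ with suitable orthogonality conditions, derive the linearized equation, use the spectral gap of the linearized operator $L$ (the single unstable/stable direction $\mathcal{Y}_\pm$ plus a coercive positive part after removing the kernel directions generated by the symmetries and the mass/energy constraints) to set up a differential inequality for $\|\epsilon(t)\|_{H^1}$, and conclude exponential decay; then $u$ is one of the two special solutions, and since $K(u_0)<0$ forces $\|\nabla u_0\|>\|\nabla Q\|$, it is $Q^+$.

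\medskip

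The main obstacle I anticipate is controlling the error terms in the localized virial identity \emph{without} radial symmetry or finite variance: the non-radial, infinite-variance case requires either a Galilean normalization to kill the momentum plus a careful treatment of the cross terms $\int \nabla\chi_R\cdot \mathrm{Im}(\bar u\nabla u)$, or a genuinely new truncated virial functional, and one must make sure the quantitative bound $K(u(t))\le -\eta\|\nabla u(t)\|^2$ survives the localization --- this interplay between the variational lower bound (which is global, using $M$, $E$ exactly at the threshold) and the spatial truncation (which introduces $O(R^{-2})$-but-also-$O(\|\nabla u\|^2 \mathbf{1}_{|x|>R})$ errors that are \emph{not} a priori small when $\|\nabla u\|$ is large) is exactly the subtlety that made the finite-variance assumption convenient in \cite{DuRo10, CFR20}, and removing it is the crux of the present paper.
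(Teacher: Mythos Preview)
Your dichotomy is the right shape, but the mechanism for controlling the localized virial error is missing. You correctly identify that the error in $V_R''$ contains terms like $\int_{|x|>R}(|\nabla u|^2+|u|^{p+1})$ which are not a priori small, and you propose to handle them by Galilean normalization or a momentum-twisted virial; neither works in general, because setting $P(u)=0$ does nothing to prevent the spatial spreading of $|\nabla u|^2$ and $|u|^{p+1}$. The paper's key device, which you do not mention, is \emph{concentration compactness}: arguing by contradiction, assume $u$ is global forward with $\sup_{t>0}\|\nabla u(t)\|_{L^2}<\infty$, and apply the nonlinear profile decomposition to $\{u(t_n)\}$ to show that $\{u(t,\cdot+X(t)):t>0\}$ is precompact in $H^1$ (Proposition~\ref{prop5.1}). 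This precompactness is exactly what makes the localized virial error small --- one obtains $|A_R(u(t))|\lesssim\varepsilon|\mu(u(t))|$ for $R>R_\varepsilon+|X(t)|$ (Lemma~\ref{lem5.6}) --- after which the modulation and exponential-convergence argument of \cite{DuRo10,CFR20} runs as in the $K>0$ case. (Your ``stays outside the tube'' branch, where $\sup_t K(u(t))\le-\eta<0$, is already covered by \cite{AkNa13,DWZ16} and needs no new virial argument; in the paper it enters only as the observation that $\mu(u(t_n))\to0$ along some sequence, Lemma~\ref{lem5.10}.)

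The endgame is also different from what you propose. Rather than reproving the uniqueness of $Q^+$ without the finite-variance hypothesis, the paper shows that exponential convergence $u(t)\to e^{it+i\theta_0}Q(\cdot-x_0)$ in $H^1$ \emph{implies} finite variance (Lemma~\ref{lem5.12}): for large $t$ the localized variance $J_R(t)$ is bounded uniformly in $R$ because $u(t)$ is $H^1$-close to a fixed translate of $Q$, and since $K(u(t))<0$ forces $J_R''(t)<0$, one has $J_R'(t)>0$ and hence $J_R(T_\varepsilon)\le\limsup_{t\to\infty}J_R(t)\le C$ independently of $R$; Fatou then gives $|x|u(T_\varepsilon)\in L^2$. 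The finite-variance classification of \cite{DuRo10,CFR20} then applies verbatim and yields $u=Q^+$ up to symmetries.
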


\begin{remark}
The negativity of the virial functional is equivalent to $\|\nabla Q\|_{L^{2}}<\|u\|_{L^{2}}$ under the mass-energy condition $M(u)=M(Q), E(u)=E(Q)$. 
\end{remark}

By scaling and the variational structure, we get the following statement as a corollary of Theorem \ref{thm1.1} (see \cite[Remark 1.6]{CFR20}). 
\begin{corollary}
Let $u$ be the solution satisfying $M(u_{0})^{\frac{1-s_{c}}{s_{c}}}E(u_{0})=M(Q)^{\frac{1-s_{c}}{s_{c}}}E(Q)$ and $\|u_{0}\|_{L^{2}}^{1-s_{c}}\|\nabla u_{0}\|^{s_{c}}>\|Q\|_{L^{2}}^{1-s_{c}}\|\nabla Q\|^{s_{c}}$. Then the conclusion of Theorem \ref{thm1.1} holds. 
\end{corollary}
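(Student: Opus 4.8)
The plan is to follow the Duyckaerts–Roudenko / Campos–Farah–Roudenko scheme for the rigidity at the threshold, but replace the finite-variance (localized) virial argument with a version that only uses the hypothesis $K(u_0)<0$, so that no finite-variance or radial assumption is needed to obtain blow-up \emph{or} grow-up (rather than finite-time blow-up). First I would recall the variational characterization of the threshold set: under $M(u)=M(Q)$, $E(u)=E(Q)$, the sign of $K(u(t))$ is conserved along the flow, and on the set $K<0$ one has a coercivity bound of the form $K(u(t))\le -\delta\,\|\nabla u(t)\|_{L^2}^2$ or, when $\|\nabla u(t)\|_{L^2}$ is close to $\|\nabla Q\|_{L^2}$, a bound $K(u(t))\le -c\,d(u(t))$ in terms of the distance $d(u(t))$ to the orbit $\{e^{i\theta}Q\}$, where $d(u(t)):=\bigl|\,\|\nabla u(t)\|_{L^2}^2-\|\nabla Q\|_{L^2}^2\,\bigr|$ (these are exactly the lemmas isolated in \cite{DuRo10,CFR20}). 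This dichotomy — "away from $Q$, strict negativity of $K$; near $Q$, modulation control" — is the structural input I would take essentially off the shelf.

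Next I would run the standard trichotomy of the modulation/convergence analysis. Either (a) $\inf_{t\in(T_{\min},T_{\max})} d(u(t)) > 0$, in which case $K(u(t))\le -\delta\|\nabla u(t)\|_{L^2}^2$ uniformly; or (b) $d(u(t))\to 0$ along a sequence of times going to one of the endpoints of the maximal interval; or (c) a mixed situation reducible to the previous two by time reversal. In case (a), I would use the ordinary virial identity. Rather than assuming $\int |x|^2 |u|^2\,dx<\infty$, I would work with the truncated virial functional $V_R(t)=\int \chi_R(x)\,|u(t,x)|^2\,dx$ with $\chi_R(x)=R^2\chi(x/R)$, $\chi$ smooth, $\chi(x)=|x|^2$ near $0$. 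The standard computation gives $V_R''(t) = 8K(u(t)) + \text{(error terms supported in }|x|\gtrsim R\text{)}$, the errors being $O(1)$ in $R$ and controlled by $\|u\|_{L^2}$, $\|\nabla u\|_{L^2}$ and the $L^{p+1}$ tail. On a finite time interval, the mass is conserved, so if $u$ does \emph{not} grow up, then $\|\nabla u(t)\|_{L^2}$ stays bounded on $[0,\infty)$, the error terms are uniformly bounded, and the coercivity $K(u)\le -\delta$ forces $V_R''(t)\le -4\delta < 0$ for $R$ large, which makes $V_R$ negative in finite time, contradicting $V_R\ge 0$. Hence, in case (a), in each time direction the solution either blows up in finite time or grows up — which is exactly alternatives (1)–(4) restricted to that direction.

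In case (b), where $d(u(t_n))\to 0$ for $t_n$ approaching an endpoint, I would invoke (or reprove, following \cite{DuRo10,CFR20}) the modulation lemma: there exist parameters $(\theta(t),x(t))$ (no scaling parameter is needed at the threshold since the mass is fixed) such that $\|e^{-i\theta(t)}u(t,\cdot+x(t)) - Q\|_{H^1}\lesssim d(u(t))$, and a differential inequality showing $d(u(t))$ decays exponentially toward that endpoint, with convergence of the modulation parameters. This produces a solution that converges exponentially to the ground state orbit as $t\to T_{\max}$ (or $T_{\min}$); by the uniqueness part of the "special solutions" theory — the classification of solutions converging to $e^{it}Q$, which is one of the cited prior results — such a solution coincides with $Q^+$ or $Q^-$ (or $e^{it}Q$) up to the symmetries of \eqref{NLS}. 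Under our standing hypothesis $K(u_0)<0$, i.e., $\|\nabla u_0\|_{L^2}>\|\nabla Q\|_{L^2}$, the only possibility in this branch is $u=Q^+$ up to symmetries, which is alternative (5); the other special solutions are excluded because $Q^-$ has $\|\nabla Q^-(0)\|_{L^2}<\|\nabla Q\|_{L^2}$ and $e^{it}Q$ saturates $K=0$.

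The main obstacle — and the only genuinely new point compared to \cite{DuRo10,CFR20} — is the quantitative control of the error terms in the truncated virial identity $V_R''(t)=8K(u(t))+O_R(1)$ under the \emph{sole} assumption that $\|\nabla u(t)\|_{L^2}$ is bounded (the "no grow-up" hypothesis), rather than assuming either finite variance (which kills the error outright) or radial symmetry (which lets one use the radial Gagliardo–Nirenberg/Strauss estimate to make the $L^{p+1}$ tail small). Here I would instead exploit the rigidity itself: in case (a) the solution stays a fixed $H^1$-distance away from the compact orbit $\{e^{i\theta}Q(\cdot-y)\}$, so by a concentration–compactness argument (or, more simply, by the conservation laws combined with the coercivity), one shows the non-grow-up solution cannot have mass escaping to spatial infinity — i.e., there is a uniform-in-time spatial localization of the $L^2$ and $L^{p+1}$ mass up to a translation path $x(t)$ — so that the truncated virial, recentered at $x(t)$, has error $o_R(1)$ as $R\to\infty$, uniformly in $t$, and the argument closes. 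Establishing this uniform localization without finite variance is the technical heart; everything else is the established threshold machinery reassembled with "finite-time blow-up" weakened to "blow-up or grow-up."
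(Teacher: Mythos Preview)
Your proposal is a sketch of Theorem~\ref{thm1.1} rather than the Corollary; the paper deduces the Corollary from Theorem~\ref{thm1.1} in one line by the scaling $u_\lambda(t,x)=\lambda^{2/(p-1)}u(\lambda^2 t,\lambda x)$, which normalizes $M(u_0)=M(Q)$ and hence $E(u_0)=E(Q)$, and sends the gradient inequality to $K(u_0)<0$. You do not mention this reduction.

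For Theorem~\ref{thm1.1} itself, your broad outline (concentration compactness to localize the solution, modulation near $Q$, localized virial, then classification) matches the paper's, but the organization hides a genuine gap. In your case~(b) you assert ``a differential inequality showing $d(u(t))$ decays exponentially'', citing \cite{DuRo10,CFR20}. But in those papers, for $K<0$, that inequality (the analogue of Lemma~\ref{lem5.7} here) relies on controlling the error $A_R(u(t))$ in the localized virial by $\varepsilon|\mu(u(t))|$, which in turn requires \emph{precompactness of the orbit modulo translation}. In \cite{DuRo10,CFR20} this precompactness is only available in the $K<0$ regime under radial or finite-variance hypotheses --- precisely what is being removed here --- so there is nothing to invoke ``off the shelf''. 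You propose the concentration--compactness localization only in case~(a), not in case~(b), so as written your case~(b) does not close.

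The paper's structure avoids this by establishing precompactness \emph{first}, via the nonlinear profile decomposition (Proposition~\ref{prop5.1}), under the sole hypothesis that $u$ is forward-global with $\sup_{t>0}\|\nabla u(t)\|_{L^2}<\infty$. Your case~(a) is then dispatched in one line by citing \cite{AkNa13,DWZ16} (this is Lemma~\ref{lem5.10}: if $K$ stayed bounded away from $0$ the solution would grow up), so no new localized virial argument is needed there. With precompactness in hand, Lemmas~\ref{lem5.6}--\ref{lem5.8} give the error control and the integral inequality, and the modulation machinery yields exponential convergence to $e^{i\theta_0}Q(\cdot-x_0)$. The paper's final move differs from yours: rather than invoking directly the classification of solutions converging exponentially to $Q$, it shows (Lemma~\ref{lem5.12}) that such convergence forces $|x|u_0\in L^2$ --- using that $J_R''(t)\lesssim \mu(u(t))<0$ for large $t$ and large $R$, so $J_R$ is bounded uniformly in $R$ by $\int|x|^2 Q^2$ --- and then simply quotes the finite-variance threshold classification of \cite{CFR20}. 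Your proposed finish via the exponential-convergence classification would also work, but you must first supply the precompactness step that drives it.
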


%
%


\subsection{Idea of the proof}
The virial identity
\begin{align*}
	\frac{d^{2}}{dt^{2}}\left(\int_{\mathbb{R}^{d}}|x|^{2}|u(t,x)|^{2}dx \right)
	=8K(u(t))
\end{align*}
is very useful for showing blow-up. Glassey \cite{Gla77} used it to show that solutions with negative energy blow up in finite time if the initial data is of finite variance. To remove the finite variance assumption in the use of the virial identity, a localized virial identity 
\begin{align*}
	\frac{d^{2}}{dt^{2}}\left(\int_{\mathbb{R}^{d}}\varphi_{R}(x)|u(t,x)|^{2}dx \right)
	=8K(u(t)) + A_{R}(u(t))
\end{align*}
is used, where $\varphi_{R} \in C_{0}^{\infty}$ satisfies $\varphi(x)=|x|^{2}$ for $|x|\leq R$ and $A_{R}$ is an error term. Under the radially symmetric assumption, by using this localized virial identity and the radial Sobolev inequality to control the error term, Ogawa and Tsutsumi \cite{OgTs91jde} proved blow-up of solutions with negative energy when $d\geq2$. Ogawa and Tsutsumi also showed blow-up for 1d quintic NLS without the radial assumption by using a localized virial identity and scaling argument in \cite{OgTs91pams}. Their proof relies on the $L^{2}$-critical nonlinearity. As far as the authors know, there is no finite time blow-up result for even solutions of \eqref{NLS} in the one dimensional case. 
Akahori and Nawa \cite{AkNa13} gave a proof of blow-up or grow-up under the assumption $\sup_{t>0} K(u(t))<0$, which holds under the mass-energy condition $M(u_{0})^{\frac{1-s_{c}}{s_{c}}}E(u_{0})<M(Q)^{\frac{1-s_{c}}{s_{c}}}E(Q)$ and negativity of $K(u_{0})$ (see also \cite{DWZ16}). They derived a contradiction by controlling the error term in the localized virial identity assuming the uniform boundedness $\sup_{t>0}\|\nabla u(t)\|_{L^{2}}<\infty$. However, this argument does not work on the threshold. Indeed, we have the possibility that $K(u(t))\to 0$ as $t\to \infty$. To overcome this difficulty, we use a concentration compactness argument, obtained by Keraani \cite{Ker01} and developed by Kenig and Merle \cite{KeMe06} to show the scattering result. The 
method of application of the concentration compactness argument to show blow-up was designed by Holmer and Roudenko \cite{HoRo10} in the study of 3d cubic NLS, and was extended to general dimensions and powers by Guevara \cite{Gue14} and Guo \cite{Guo16}. By the concentration compactness argument, we can show that the orbit of the global-in-time uniformly bounded solution $u$ with negative $K$ is compact in $H^{1}$. More precisely, there exists $X:(0,\infty) \to \mathbb{R}^{d}$ such that $\{u(t, \cdot-X(t)):t>0\}$ is precompact in $H^{1}(\mathbb{R}^{d})$. In the 
below-threshold case, we have $\sup_{t>0} K(u(t))<0$, the error term is controlled by the precompactness, and we get a contradiction. On the threshold, $K(u(t))\to 0$ may happen. We apply a modulation argument to show that $u(t) \to e^{i\theta_{0}}Q(\cdot-X_{0})$ as $t\to \infty$ in $H^{1}(\mathbb{R}^{d})$. This argument is similar to the case of positive $K$, done by Duyckaerts and Roudenko \cite{DuRo10} (see also \cite{CFR20}). Once we get the convergence, we can show that the uniformly bounded global solution $u$ is of finite variance by using negativity of $K$. The solution with finite variance must be $Q^{+}$ up to symmetries by \cite{DuRo10,CFR20}. This implies Theorem \ref{thm1.1}. 

\section{Preliminaries}

\subsection{Variational structure}

In this section, we collect lemmas concerning the variational structure. 

\begin{lemma}[Pohozaev identity (e.g. {\cite[Corollary 8.1.3]{Caz03}})]
We have
\begin{align*}
	\|Q\|_{L^{p+1}}^{p+1}=\frac{2(p+1)}{d(p-1)}\|\nabla Q\|_{L^{2}}^{2}=\frac{p+1}{p-1}\|Q\|_{L^{2}}^{2}.
\end{align*}
In particular, we have
\begin{align*}
	E(Q)
	=\frac{d(p-1)-4}{2d(p-1)}\|\nabla Q\|_{L^{2}}^{2}.
\end{align*}
\end{lemma}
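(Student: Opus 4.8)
The plan is to exploit that $Q$ is a critical point of the action and to extract two independent integral identities by testing the elliptic equation $-\Delta Q + Q - Q^{p}=0$ against two natural multipliers. First I would record the regularity and decay needed to justify all the integrations by parts below: by standard elliptic regularity $Q$ is smooth, and by the maximum principle together with a comparison argument $Q$ and its first derivatives decay exponentially, so every boundary term vanishes and each integral converges.

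The first identity comes from pairing the equation with $Q$ itself: multiplying $-\Delta Q + Q - Q^{p}=0$ by $Q$ and integrating by parts gives the Nehari-type relation
\begin{align*}
	\|\nabla Q\|_{L^{2}}^{2}+\|Q\|_{L^{2}}^{2}=\|Q\|_{L^{p+1}}^{p+1}.
\end{align*}
The second is the Pohozaev relation, obtained by pairing the equation with the dilation multiplier $x\cdot\nabla Q$. Equivalently --- and this is the route I would actually take, since it avoids a delicate termwise integration by parts --- I would use that $Q$ is a critical point of the action $S=E+\tfrac12 M$, so $\frac{d}{d\lambda}S(Q_{\lambda})\big|_{\lambda=1}=0$ for the spatial rescaling $Q_{\lambda}(x)=Q(x/\lambda)$. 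Inserting the scaling laws $\|\nabla Q_{\lambda}\|_{L^{2}}^{2}=\lambda^{d-2}\|\nabla Q\|_{L^{2}}^{2}$, $\|Q_{\lambda}\|_{L^{2}}^{2}=\lambda^{d}\|Q\|_{L^{2}}^{2}$, $\|Q_{\lambda}\|_{L^{p+1}}^{p+1}=\lambda^{d}\|Q\|_{L^{p+1}}^{p+1}$ and differentiating yields
\begin{align*}
	\frac{d-2}{2}\|\nabla Q\|_{L^{2}}^{2}+\frac{d}{2}\|Q\|_{L^{2}}^{2}=\frac{d}{p+1}\|Q\|_{L^{p+1}}^{p+1}.
\end{align*}

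These two relations form a linear system in the three quantities $\|\nabla Q\|_{L^{2}}^{2}$, $\|Q\|_{L^{2}}^{2}$, $\|Q\|_{L^{p+1}}^{p+1}$; solving it expresses each of $\|Q\|_{L^{p+1}}^{p+1}$ and $\|Q\|_{L^{2}}^{2}$ as an explicit multiple of $\|\nabla Q\|_{L^{2}}^{2}$, which is the content of the displayed equalities. As an even cleaner route to the first one, differentiating $S$ along the mass-preserving rescaling $\lambda^{d/2}Q(\lambda x)$ (whose only nonconstant pieces scale as $\lambda^{2}$ and $\lambda^{d(p-1)/2}$) gives directly $\|Q\|_{L^{p+1}}^{p+1}=\frac{2(p+1)}{d(p-1)}\|\nabla Q\|_{L^{2}}^{2}$. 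Finally, substituting this into $E(Q)=\frac12\|\nabla Q\|_{L^{2}}^{2}-\frac{1}{p+1}\|Q\|_{L^{p+1}}^{p+1}$ and simplifying the coefficient $\frac12-\frac{2}{d(p-1)}$ yields the stated formula for $E(Q)$. The main obstacle is the Pohozaev step: carrying out the $x\cdot\nabla Q$ computation correctly (in particular the identity $\int\Delta Q\,(x\cdot\nabla Q)\,dx=\frac{d-2}{2}\|\nabla Q\|_{L^{2}}^{2}$) and verifying that the decay of $Q$ kills every boundary contribution --- the scaling formulation above is the least error-prone way to discharge exactly this point.
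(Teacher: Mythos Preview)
Your argument is correct and is the standard derivation: the paper does not actually prove this lemma but simply quotes it from Cazenave's textbook, so there is no ``paper proof'' to compare against. Your route via the Nehari relation (testing against $Q$) together with the Pohozaev relation (obtained either by testing against $x\cdot\nabla Q$ or, equivalently and more cleanly, by differentiating the action along the dilation family) is exactly how the cited reference proceeds, and the substitution into $E(Q)$ is immediate.
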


The next lemma follows from the variational structure of $Q$. 

\begin{lemma}
Let $u_{0}$ satisfy $M(u_{0})=M(Q)$ and $E(u_{0})=E(Q)$ and $u$ be the solution to \eqref{NLS} with $u(0)=u_{0}$. Then we have the following.
\begin{enumerate}
\item If $K(u_{0})>0$, then $K(u(t))>0$ for all $t\in \mathbb{R}$.
\item If $K(u_{0})=0$, then $u_{0}(x)=e^{i\theta_{0}}Q(x-x_{0})$ for some $\theta_{0} \in \mathbb{R}$ and $x_{0} \in\mathbb{R}^{d}$.
\item If $K(u_{0})<0$, then $K(u(t))<0$ for all $t$ in the interval of existence. 
\end{enumerate}
\end{lemma}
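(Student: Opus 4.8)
The plan is to reduce all three claims to a sharp Gagliardo--Nirenberg inequality together with a one-parameter scaling argument. For $\varphi\in H^{1}(\mathbb{R}^{d})\setminus\{0\}$ I would introduce the $L^{2}$-invariant rescaling $\varphi_{\lambda}(x):=\lambda^{d/2}\varphi(\lambda x)$, so that $M(\varphi_{\lambda})=M(\varphi)$ and
\[
	g_{\varphi}(\lambda):=E(\varphi_{\lambda})=\frac{\lambda^{2}}{2}\|\nabla\varphi\|_{L^{2}}^{2}-\frac{\lambda^{d(p-1)/2}}{p+1}\|\varphi\|_{L^{p+1}}^{p+1}.
\]
Since $p>1+4/d$ we have $d(p-1)/2>2$, so $g_{\varphi}$ is positive and increasing for small $\lambda$, tends to $-\infty$ as $\lambda\to\infty$, and has a unique positive critical point $\lambda_{0}=\lambda_{0}(\varphi)$, which is its strict maximum. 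A direct computation gives $g_{\varphi}'(1)=K(\varphi)$, so
\[
	K(\varphi)<0\iff\lambda_{0}<1,\qquad K(\varphi)=0\iff\lambda_{0}=1,\qquad K(\varphi)>0\iff\lambda_{0}>1,
\]
and, using $K(\varphi_{\lambda_{0}})=0$ (which follows from $g_{\varphi}'(\lambda_{0})=0$), one finds $g_{\varphi}(\lambda_{0})=\frac{d(p-1)-4}{2d(p-1)}\|\nabla\varphi_{\lambda_{0}}\|_{L^{2}}^{2}$, which is precisely the form of the Pohozaev expression for $E(Q)$.

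The variational input is as follows. From the sharp Gagliardo--Nirenberg inequality $\|f\|_{L^{p+1}}^{p+1}\leq C_{\mathrm{GN}}\|\nabla f\|_{L^{2}}^{d(p-1)/2}\|f\|_{L^{2}}^{p+1-d(p-1)/2}$, whose optimizers are exactly the functions $e^{i\theta}\mu^{d/2}Q(\mu(\cdot-y))$, combined with $K(f)=0$ and $M(f)=M(Q)$, one gets $\|\nabla f\|_{L^{2}}\geq\|\nabla Q\|_{L^{2}}$, with equality if and only if $f=e^{i\theta}Q(\cdot-y)$ (the mass constraint being automatic for optimizers, and the scaling $\mu$ being pinned to $1$ by $K(f)=0$ together with $M(f)=M(Q)$). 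Since $\varphi_{\lambda_{0}}$ satisfies $K(\varphi_{\lambda_{0}})=0$ and $M(\varphi_{\lambda_{0}})=M(\varphi)$, this yields, for every $\varphi$ with $M(\varphi)=M(Q)$,
\[
	\max_{\lambda>0}E(\varphi_{\lambda})=g_{\varphi}(\lambda_{0})=\frac{d(p-1)-4}{2d(p-1)}\|\nabla\varphi_{\lambda_{0}}\|_{L^{2}}^{2}\geq\frac{d(p-1)-4}{2d(p-1)}\|\nabla Q\|_{L^{2}}^{2}=E(Q),
\]
with equality if and only if $\varphi_{\lambda_{0}}=e^{i\theta}Q(\cdot-y)$.

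Now I would deduce the three statements. First, $M(u(t))=M(Q)>0$ forces $u(t)\neq0$ at every time, so the scaling argument always applies. For (2): $K(u_{0})=0$ means $\lambda_{0}(u_{0})=1$, hence $E(u_{0})=g_{u_{0}}(\lambda_{0})$; since $E(u_{0})=E(Q)$, the equality case above gives $u_{0}=e^{i\theta_{0}}Q(\cdot-x_{0})$. For (1) and (3): the map $t\mapsto K(u(t))$ is continuous on the interval of existence because $u\in C(I,H^{1})$ and $K$ is continuous on $H^{1}$; if it failed to keep the sign of $K(u_{0})$, the intermediate value theorem would produce a time $t_{2}$ with $K(u(t_{2}))=0$, hence $u(t_{2})=e^{i\theta}Q(\cdot-y)$ by (2). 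But $e^{i(t-t_{2})+i\theta}Q(\cdot-y)$ solves \eqref{NLS} and equals $u(t_{2})$ at $t=t_{2}$, so by uniqueness of $H^{1}$ solutions $u$ coincides with this solitary wave on all of $I$, whence $K(u_{0})=K(Q)=0$, a contradiction. Therefore $K(u(t))$ keeps the sign of $K(u_{0})$ throughout $I$. In case (1), a short computation from $E(u(t))=E(Q)$, the inequality $K(u(t))>0$, and $d(p-1)>4$ bounds $\|\nabla u(t)\|_{L^{2}}$ uniformly, so the blow-up criterion gives $I=\mathbb{R}$.

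The one point that is not purely computational is the rigidity invoked above: the identification of the optimizers of the sharp Gagliardo--Nirenberg inequality (equivalently, uniqueness up to the symmetries of \eqref{NLS} of the minimizer of the associated constrained variational problem). I would either quote this from the literature or recover it from Schwarz symmetrization together with the uniqueness of the positive radial solution $Q$ of the elliptic equation.
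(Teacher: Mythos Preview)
Your argument is correct and is precisely the standard variational proof: analyze the one-parameter scaling curve $\lambda\mapsto E(\varphi_{\lambda})$, use the sharp Gagliardo--Nirenberg inequality (with its equality case) to see that $K(f)=0$ together with $M(f)=M(Q)$ and $E(f)=E(Q)$ forces $f$ to be $Q$ up to phase and translation, and then combine continuity of $t\mapsto K(u(t))$ with uniqueness of the flow. The paper does not give its own proof of this lemma but simply refers to \cite[Lemma~2.2]{DuRo10}, where exactly this argument is carried out (for $d=3$, $p=3$; the general case is identical), so your approach coincides with the one the paper implicitly invokes. One cosmetic remark: the Gagliardo--Nirenberg optimizers are a priori the two-parameter family $e^{i\theta}\alpha\,Q(\mu(\cdot-y))$; it is the mass constraint $M(f)=M(Q)$ that reduces this to $\alpha=\mu^{d/2}$, and then $K(f)=0$ pins $\mu=1$, which is what you use but state slightly imprecisely.
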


See {\cite[Lemma 2.2]{DuRo10}} for the proof when $d=3$ and $p=3$.

%
%
%


\subsection{Localized virial identity}

For a solution $u$ to \eqref{NLS}, define 
\begin{align*}
	J_{R}(t)=\int_{\mathbb{R}^{d}} \varphi_{R}(x)|u(t,x)|^{2}dx,
\end{align*}
where $\varphi_{R}(x) = R^{2}\varphi\left(\frac{|x|}{R}\right)$ and $\varphi:[0,\infty) \to [0,\infty)$ satisfies 
\begin{align*}
	\varphi(r)=
	\begin{cases}
	r^{2}, & (r \leq 1),
	\\
	0, & (r\geq 3).
	\end{cases}
\end{align*}
and 
\begin{align*}
	\frac{d^{2}\varphi}{dr^{2}}(r) \leq 2 \text{ for } r \geq 0.
\end{align*}
Then by standard computations we have
\begin{align*}
	J_{R}'(t)&=2\im \int_{\mathbb{R}^{d}} \nabla \varphi_{R} \nabla u \overline{u}dx,
	\\
	J_{R}''(t)&=8K(u(t))+A_{R}(u(t)),
\end{align*}
where
\begin{align*}
	A_{R}(u(t))&=\int_{|x|\geq R} |\nabla u(t,x)|^{2}\left( 4\partial_{r}^{2}\varphi_{R}-8\right) dx
	\\
	&\quad +\frac{2(p-1)}{p+1} \int_{|x|\geq R} |u(t,x)|^{p+1}\left(2d-\Delta\varphi_{R} \right) dx
	+\int_{|x|\geq R} |u(t,x)|^{2}\Delta^{2}\varphi_{R} dx.
\end{align*}


\subsection{Strichartz norms}

Let $s\in [0,1)$. We say that a pair $(q,r) \in \mathbb{R}^{2}$ is $\dot{H}^{s}$-admissible and denote $(q,r) \in \Lambda_{s}$ if 
\begin{align*}
	\frac{2}{q} + \frac{d}{r} = \frac{d}{2} -s, \quad 
	\frac{2d}{d-2s} < r< \frac{2d}{d-2}.
\end{align*}
We define 
\begin{align*}
	\|u\|_{S(\dot{H}^{s};I)}:=\sup_{(q,r)\in \Lambda_{s}}\|u\|_{L_{t}^{q}(I;L_{x}^{r})}. 
\end{align*}
We set $\|u\|_{S(\dot{H}^{s})}=\|u\|_{S(\dot{H}^{s};\mathbb{R})}$ for short. 


\section{Modulation}
\label{sec3}

We define
\begin{align*}
	\mu(f):=\|\nabla Q\|_{L^{2}}^{2} - \|\nabla f\|_{L^{2}}^{2}.
\end{align*}
By the Pohozaev identity, if $E(u)=E(Q)$, we have 
\begin{align*}
	K(u)=\frac{d(p-1)-4}{4} \mu(u).
\end{align*}

See \cite{DuRo10,CFR20} for Lemmas \ref{lem3.2} and \ref{lem3.3} below. 

\begin{lemma}[Modulation]
\label{lem3.2}
There exist a positive constant $\mu_{0}$ and a function $\varepsilon(\mu): (0,\mu_{0}) \to (0,\infty)$ with $\varepsilon(\mu)\to 0$ as $\mu\to 0$ such that, for any $f\in H^{1}(\mathbb{R}^{d})$ satisfying $M(f)=M(Q)$, $E(f)=E(Q)$, and $|\mu(f)|<\mu_{0}$, there exist a $C^{1}$-mapping 
$f \mapsto (x,\theta)$ satisfying 
\begin{align*}
	&\|e^{-i\theta} f(\cdot +x) - Q\|_{H^{1}} \leq \varepsilon(\mu(f)),
	\\
	&\im \int_{\mathbb{R}^{d}} g(x) Q(x)dx =0
	\text{ and } \re \int_{\mathbb{R}^{d}} g(x) \nabla Q(x)dx=0,
\end{align*}
where $g=e^{-i\theta} f(\cdot +x) - Q$.
\end{lemma}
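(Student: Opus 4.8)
The plan is to prove the Modulation Lemma by a standard implicit function theorem argument, exploiting the fact that $Q$ is the unique (up to symmetries) minimizer of a suitable variational problem, so that any $f$ with the same mass and energy and with $\|\nabla f\|_{L^2}$ close to $\|\nabla Q\|_{L^2}$ must be $H^1$-close to the orbit $\{e^{i\theta}Q(\cdot - y) : \theta \in \R, y \in \R^d\}$.

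\emph{Step 1: Closeness to the orbit.} First I would establish that $\mu(f) \to 0$ forces $\inf_{\theta, y}\|e^{-i\theta}f(\cdot + y) - Q\|_{H^1} \to 0$. This is a compactness/concentration argument: if not, there is a sequence $f_n$ with $M(f_n) = M(Q)$, $E(f_n) = E(Q)$, $\mu(f_n) \to 0$, but staying a fixed distance from the orbit. Since $\|\nabla f_n\|_{L^2} \to \|\nabla Q\|_{L^2}$ and $\|f_n\|_{L^2} = \|Q\|_{L^2}$, the sequence is bounded in $H^1$; by the profile decomposition / concentration-compactness (Keraani, as cited) together with $E(f_n) = E(Q)$ pinned exactly at the threshold and the Pohozaev/Gagliardo--Nirenberg sharp constant characterization of $Q$, one extracts a single nontrivial profile which, after translation and phase, converges strongly to $Q$ (the sharp Gagliardo--Nirenberg inequality is saturated only by $Q$ up to symmetries). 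This contradicts the separation assumption. This is the step I expect to be the main obstacle, since it is where the variational rigidity of $Q$ enters; however, it is essentially the content of the variational lemmas cited from \cite{DuRo10,CFR20}, so I would invoke those.

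\emph{Step 2: Orthogonality via implicit function theorem.} Given that $f$ is $H^1$-close to some $e^{i\theta_1}Q(\cdot - y_1)$, I would replace $f$ by $\tilde f = e^{-i\theta_1}f(\cdot + y_1)$, which is close to $Q$, and then fine-tune the parameters. Define $\Phi : H^1 \times \R^d \times \R \to \R^{d+1}$ by
\begin{align*}
	\Phi(h, x, \theta) = \left( \im \int_{\R^d} \left(e^{-i\theta}h(\cdot + x) - Q\right)(y)\, Q(y)\, dy,\ \re \int_{\R^d} \left(e^{-i\theta}h(\cdot + x) - Q\right)(y)\, \nabla Q(y)\, dy \right).
\end{align*}
At $h = Q$, $(x,\theta) = (0,0)$ we have $\Phi = 0$. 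The differential of $\Phi$ in $(x,\theta)$ at this point is computed from $\partial_\theta(e^{-i\theta}Q)|_{\theta=0} = -iQ$ and $\partial_{x_j}(Q(\cdot + x))|_{x=0} = \partial_j Q$; this yields a block-diagonal matrix with entries $\|Q\|_{L^2}^2$ (from the phase direction, pairing $-iQ$ against $Q$ in the imaginary part) and $\int |\partial_j Q|^2\,dy \cdot \delta_{jk}$ (from the translation directions, using that $\int \partial_j Q\, \partial_k Q = 0$ for $j \neq k$ by symmetry of $Q$), all nonzero. Hence the differential is invertible, and the implicit function theorem produces a $C^1$ map $h \mapsto (x(h), \theta(h))$ on a neighborhood of $Q$ in $H^1$ with $\Phi(h, x(h), \theta(h)) = 0$, which are exactly the two stated orthogonality conditions on $g = e^{-i\theta}h(\cdot + x) - Q$.

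\emph{Step 3: Assembling and the modulus of continuity.} Composing the two steps (first translate/phase to get near $Q$, then apply the IFT correction) gives the $C^1$ map $f \mapsto (x,\theta)$. For the quantitative bound $\|g\|_{H^1} \leq \eps(\mu(f))$ with $\eps(\mu) \to 0$, I would argue by contradiction again: if some sequence $f_n$ with $\mu(f_n) \to 0$ had $\|g_n\|_{H^1}$ bounded below, Step~1 would still give a subsequence converging along the orbit to $Q$, and then continuity of the IFT-produced parameters forces $x(f_n), \theta(f_n)$ to converge to the orbit parameters and hence $g_n \to 0$ in $H^1$, a contradiction. One must also shrink $\mu_0$ if necessary so that the whole construction stays inside the IFT neighborhood. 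This packages cleanly into the stated $\eps(\mu)$.
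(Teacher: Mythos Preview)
The paper does not prove this lemma at all; it simply refers the reader to \cite{DuRo10,CFR20}. Your proposal reproduces exactly the standard argument found in those references (variational rigidity of $Q$ via the sharp Gagliardo--Nirenberg characterization to obtain closeness to the orbit, followed by the implicit function theorem to impose the orthogonality conditions), so it is correct and in line with what the paper invokes.
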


Let $u$ be a solution to \eqref{NLS} and $I_{\mu_{0}}:=\{t\in\mathbb{R}: |\mu(u(t))|<\mu_{0}\}$. Then by choosing $(x(t),\theta (t))$ for $t\in I_{\mu_{0}}$ as in Lemma \ref{lem3.2}, we can write
\begin{align*}
	u(t,x+x(t))=e^{i\theta(t)+it} (Q+g)
	=e^{i\theta(t)+it} (Q+\rho(t)Q+h),
\end{align*}
where
\begin{align*}
	\rho(t):= \frac{\re \int g Q^{p} dx}{\|Q\|_{L^{p+1}}^{p+1}}.
\end{align*}

\begin{lemma}[Estimates for the modulation parameters]
\label{lem3.3}
Let $u$ be a solution to \eqref{NLS} satisfying $M(u)=M(Q)$ and $E(u)=E(Q)$. The following estimates are valid for $t \in I_{\mu_{0}}$, taking $\mu_{0}$ smaller if necessary:
\begin{align*}
	&|\rho(t)| \approx \|h(t)\|_{H^{1}} \approx  \|g(t)\|_{H^{1}} \approx  \left| \re \int Qh  dx \right| \approx  |\mu(u(t))|,
	\\
	&|\rho'(t)| + |x'(t)| + |\theta'(t)| \lesssim |\mu(u(t))|. 
\end{align*}
\end{lemma}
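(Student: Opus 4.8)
The plan is to prove Lemma \ref{lem3.3} by differentiating the orthogonality conditions from Lemma \ref{lem3.2} along the flow and extracting bounds via a coercivity (non-degeneracy) argument for the linearized operator at $Q$.

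First I would establish the first chain of equivalences, which is static: it uses only that $f = u(t)$ satisfies $M(f)=M(Q)$, $E(f)=E(Q)$, and the decomposition $f(\cdot+x) = e^{i\theta}(Q + \rho Q + h)$ with the orthogonality conditions $\im\int gQ = 0$, $\re\int g\nabla Q = 0$. Writing out $M(f)=M(Q)$ gives $2\re\int \overline{Q}g + \|g\|_{L^2}^2 = 0$; writing out $E(f)=E(Q)$ together with the Pohozaev identity gives, after expanding $\|Q+g\|_{L^{p+1}}^{p+1}$ to second order, a relation of the form $\mu(f) = \langle L g, g\rangle + O(\|g\|_{H^1}^3)$ where $L$ is (a piece of) the linearized operator $-\Delta + 1 - pQ^{p-1}$ acting on the real part. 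The standard coercivity estimate — $\langle Lg,g\rangle \gtrsim \|g\|_{H^1}^2$ on the subspace orthogonal to the kernel/negative directions, which is exactly what the two orthogonality conditions plus the $M$-constraint enforce — yields $|\mu(f)| \approx \|g\|_{H^1}^2 \approx \rho^2$ once one checks $|\rho| \approx \|g\|_{H^1}$ and $|\re\int Qh| \approx \|g\|_{H^1}$; these latter comparisons follow from $h = g - \rho Q$, the definition of $\rho$, and $\re\int Q h = \re\int Q g - \rho\|Q\|_{L^2}^2$ combined with the $M$-constraint. (One should be careful that the square appearing in $|\mu| \approx \|g\|_{H^1}^2$ is consistent with the claimed statement $|\rho|\approx|\mu|$ — this is how Duyckaerts--Roudenko and Campos--Farah--Roudenko normalize, so I would follow their bookkeeping and cite \cite{DuRo10,CFR20}.)

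Next, for the dynamic estimates $|\rho'| + |x'| + |\theta'| \lesssim |\mu(u(t))|$, I would plug the ansatz $u(t,x+x(t)) = e^{i\theta(t)+it}(Q + g(t))$ into \eqref{NLS} to derive the evolution equation for $g$: schematically $i\partial_t g = (-\Delta+1)g - (\text{nonlinear terms in }Q,g) + \theta'(t)(Q+g) - i x'(t)\cdot\nabla(Q+g)$. Then differentiating the two orthogonality relations $\im\int gQ=0$ and $\re\int g\nabla Q=0$ in $t$ (and the $M$-constraint, or equivalently the relation defining $\rho$) produces a linear system for $(\theta', x')$ (and $\rho'$) whose coefficient matrix is, to leading order, built from the nondegenerate pairings $\int Q^2$, $\int |\nabla Q|^2$, $\int |\partial_{x_j}Q|^2$ — invertible — and whose right-hand side is a sum of terms each at least linear in $g$, hence $O(\|g\|_{H^1}) = O(|\mu(u(t))|^{1/2})$ at worst, improved to $O(\|g\|_{H^1}^2) = O(|\mu|)$ after using that the linear-in-$g$ contributions vanish by the choice of orthogonality directions (the $Q$ and $\nabla Q$ directions are precisely the resonant ones being projected out). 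Solving the system and using the first chain of equivalences gives the stated bound.

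The main obstacle I expect is the bookkeeping around whether the leading-order linear-in-$g$ terms genuinely cancel in the modulation equations, so that one lands on $|\mu|$ rather than only $|\mu|^{1/2}$; this is where the specific algebraic structure of the orthogonality conditions in Lemma \ref{lem3.2} (chosen exactly so that $Q$ and $\nabla Q$ pair trivially against the relevant pieces of the linearized flow) does the work, and it must be matched carefully against the coercivity of $L$ on the orthogonal complement. Since this is entirely parallel to the threshold analysis in \cite{DuRo10} for $d=3,p=3$ and \cite{CFR20} in general, the cleanest route is to set up the decomposition and evolution equation as above and then invoke those references for the detailed estimates, noting that no step used blow-up or global existence — only the local-in-time ansatz on $I_{\mu_0}$.
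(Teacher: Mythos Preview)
The paper does not prove this lemma; it simply refers to \cite{DuRo10,CFR20}, so your ultimate strategy of deferring to those references matches. However, the sketch you give contains a genuine scaling error. You claim the energy constraint yields $\mu(f) = \langle Lg,g\rangle + O(\|g\|_{H^1}^3)$ and hence $|\mu| \approx \|g\|_{H^1}^2$. In fact, directly from the definition one has $\mu(u) = -2\re\int\nabla Q\cdot\nabla\bar g - \|\nabla g\|_{L^2}^2$; integrating by parts, using $-\Delta Q + Q = Q^p$, and inserting the mass constraint $2\re\int Q\bar g = -\|g\|_{L^2}^2$ gives
\[
  \mu(u) = -2\rho\,\|Q\|_{L^{p+1}}^{p+1} - \|g\|_{H^1}^2,
\]
so $\mu$ carries a nonvanishing term \emph{linear} in $g$ (equivalently in $\rho$), and $|\mu| \approx |\rho|$ directly. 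What the mass and energy constraints together actually produce is $\langle L_+ g_1,g_1\rangle + \langle L_- g_2,g_2\rangle = O(\|g\|_{H^1}^3)$; coercivity of $L_\pm$ on the subspace cut out by the orthogonality conditions on $h$ (namely $h_1 \perp Q^p,\nabla Q$ and $h_2 \perp Q$) then yields $\|h\|_{H^1} \lesssim |\rho|$, closing the chain $|\mu| \approx |\rho| \approx \|g\|_{H^1} \approx \|h\|_{H^1}$.

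This mis-scaling also means your worry about the dynamic estimates is misplaced: since $\|g\|_{H^1} \approx |\mu|$ rather than $|\mu|^{1/2}$, the right-hand side of the modulation system for $(\theta',x',\rho')$ being $O(\|g\|_{H^1})$ is already $O(|\mu|)$, and no cancellation of linear-in-$g$ terms is needed. Your parenthetical flagged the inconsistency but attributed it to ``normalization''; it is a substantive point about which expansion delivers which relation.
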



\section{Profile decomposition}

We have the following nonlinear profile decomposition. See \cite[Proposition 3.6]{Gue14} and \cite[Proposition 6.5]{Guo16}

\begin{lemma}[Nonlinear profile decomposition]
\label{lem4.1}
Let $\{\varphi_{n}\}_{n \in \mathbb{N}}$ be a bounded sequence in $H^{1}(\mathbb{R}^{d})$. Then, there exists a subsequence, which is also denoted by $\{\varphi_{n}\}_{n \in \mathbb{N}}$, sequences $\{\psi^{j}\}_{j \in \mathbb{N}}$ and $\{W_{n}^{j}\}_{n,j\in \mathbb{N}}$ in $H^{1}(\mathbb{R}^{d})$, time sequence $\{t_{n}^{j}\}_{n,j\in \mathbb{N}}$ in $\mathbb{R}$, and spatial sequence $\{x_{n}^{j}\}_{n,j\in \mathbb{N}}$ in $\mathbb{R}^{n}$ such that 
\begin{align*}
	\varphi_{n}=\sum_{j=1}^{J} \NLS(-t_{n}^{j})\psi^{j}(\cdot-x_{n}^{j}) +W_{n}^{J}
\end{align*}
for each $J \in \mathbb{N}$, and the following statements hold:
\begin{enumerate}
\item For fixed $j$, $t_{n}^{j}=0$ for all $n$ or $|t_{n}^{j}| \to \infty$ as $n \to \infty$.
\item If $t_{n}^{j} \to \infty$, then $\|\NLS(-t)\psi^{j}\|_{S(\dot{H}^{s_{c}};[0,\infty))}< \infty$ and if $t_{n}^{j} \to -\infty$, then $\|\NLS(-t)\psi^{j}\|_{S(\dot{H}^{s_{c}};(-\infty,0])}< \infty$.
\item The orthogonality of the parameters: for $j \neq k$, 
\begin{align*}
	\lim_{n \to \infty} |t_{n}^{j}-t_{n}^{k}| + |x_{n}^{j}-x_{n}^{k}|=\infty.
\end{align*}
\item Smallness of the remainder: $\NLS(t)W_{n}^{J}$ is global for sufficiently large $J$ and
\begin{align*}
	\lim_{n \to \infty} \|\NLS(t)W_{n}^{J}\|_{S(\dot{H}^{s_{c}})} \to 0 \text{ as } J \to \infty.
\end{align*}
\item The orthogonality of the norms: For any $J\in \mathbb{N}$, 
\begin{align*}
	\|\varphi_{n}\|_{\dot{H}^{s}}^{2} = \sum_{j=1}^{J} \|\NLS(-t_{n}^{j})\psi^{j}\|_{\dot{H}^{s}}^{2}+ \|W_{n}^{J}\|_{\dot{H}^{s}}^{2} + o_{n}(1)
\end{align*}
for all $s \in [0,1]$ and 
\begin{align*}
	\|\varphi_{n}\|_{L^{p+1}}^{p+1} = \sum_{j=1}^{J} \|\NLS(-t_{n}^{j})\psi^{j}\|_{L^{p+1}}^{p+1}+ \|W_{n}^{J}\|_{L^{p+1}}^{p+1} + o_{n}(1).
\end{align*}
In particular, we have
\begin{align*}
	E(\varphi_{n})= \sum_{j=1}^{J}E(\psi^{j})+E(W_{n}^{J})+o_{n}(1)
\end{align*}
for any $J \in \mathbb{N}$. 
\end{enumerate}
\end{lemma}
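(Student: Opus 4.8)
The plan is to build the nonlinear decomposition on top of the \emph{linear} profile decomposition, following the strategy of Keraani \cite{Ker01} and Kenig--Merle \cite{KeMe06} as adapted to the intercritical regime in \cite{Gue14,Guo16}. First I would invoke the linear profile decomposition for the bounded sequence $\{\varphi_{n}\}$ in $H^{1}(\mathbb{R}^{d})$: after passing to a subsequence one obtains, for each $J$,
\begin{align*}
	\varphi_{n} = \sum_{j=1}^{J} e^{it_{n}^{j}\Delta}\phi^{j}(\cdot - x_{n}^{j}) + w_{n}^{J},
\end{align*}
with linear profiles $\phi^{j}\in H^{1}$, parameters $t_{n}^{j}\in\mathbb{R}$ and $x_{n}^{j}\in\mathbb{R}^{d}$, and remainders $w_{n}^{J}$ enjoying the time dichotomy (1), the parameter orthogonality (3), the $\dot{H}^{s}$ and $L^{p+1}$ Pythagorean expansions (5), and the linear smallness $\limsup_{n}\|e^{it\Delta}w_{n}^{J}\|_{S(\dot{H}^{s_{c}})}\to 0$ as $J\to\infty$. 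Properties (1), (3) and the $\dot{H}^{s}$ part of (5) are then inherited by the nonlinear decomposition essentially for free.

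The second step is to convert each linear profile into a nonlinear one. If $t_{n}^{j}=0$ for all $n$, I set $\psi^{j}:=\phi^{j}$, so that $\NLS(-t_{n}^{j})\psi^{j}=\psi^{j}=\phi^{j}$. If $|t_{n}^{j}|\to\infty$, I define $\psi^{j}$ by solving \eqref{NLS} with prescribed scattering data: when $t_{n}^{j}\to+\infty$ there is a unique solution with $\|\NLS(-t)\psi^{j}-e^{-it\Delta}\phi^{j}\|_{H^{1}}\to 0$ as $t\to+\infty$, and symmetrically when $t_{n}^{j}\to-\infty$. The existence of these wave operators follows from the small-data theory together with the dispersive decay $\|e^{it\Delta}\phi^{j}\|_{S(\dot{H}^{s_{c}};[T,\infty))}\to 0$ as $T\to\infty$, which simultaneously yields the finite Strichartz norms asserted in property (2). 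Evaluating the scattering difference at $t=t_{n}^{j}$ gives the crucial approximation $\|\NLS(-t_{n}^{j})\psi^{j}-e^{-it_{n}^{j}\Delta}\phi^{j}\|_{H^{1}}\to 0$ as $n\to\infty$, for each fixed $j$.

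With the $\psi^{j}$ in hand, I define $W_{n}^{J}$ by the stated identity, so that for each fixed $J$
\begin{align*}
	W_{n}^{J}-w_{n}^{J}=\sum_{j=1}^{J}\big[e^{-it_{n}^{j}\Delta}\phi^{j}-\NLS(-t_{n}^{j})\psi^{j}\big](\cdot - x_{n}^{j}),
\end{align*}
which tends to $0$ in $H^{1}(\mathbb{R}^{d})$ as $n\to\infty$. This $H^{1}$-closeness transfers the Pythagorean expansions of (5) from $e^{-it_{n}^{j}\Delta}\phi^{j}$ to $\NLS(-t_{n}^{j})\psi^{j}$ (using the Sobolev embedding $H^{1}\hookrightarrow L^{p+1}$ and the parameter orthogonality (3) for the $L^{p+1}$ piece), and, via energy conservation and translation invariance, gives $E(\NLS(-t_{n}^{j})\psi^{j}(\cdot - x_{n}^{j}))=E(\psi^{j})$, hence the energy expansion. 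For property (4), the same closeness yields $\limsup_{n}\|e^{it\Delta}W_{n}^{J}\|_{S(\dot{H}^{s_{c}})}=\limsup_{n}\|e^{it\Delta}w_{n}^{J}\|_{S(\dot{H}^{s_{c}})}\to 0$ as $J\to\infty$; once this drops below the small-data threshold (for $J$ and $n$ large), the small-data global theory makes $\NLS(t)W_{n}^{J}$ global with $\|\NLS(t)W_{n}^{J}\|_{S(\dot{H}^{s_{c}})}\lesssim\|e^{it\Delta}W_{n}^{J}\|_{S(\dot{H}^{s_{c}})}\to 0$.

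I expect the main obstacle to be the second step: constructing the nonlinear profiles when $|t_{n}^{j}|\to\infty$ and establishing the $H^{1}$-approximation $\NLS(-t_{n}^{j})\psi^{j}\approx e^{-it_{n}^{j}\Delta}\phi^{j}$ with the uniform (in $n$) Strichartz control of property (2). This is where the intercritical scattering and wave-operator theory enters, and where the bookkeeping of the long-time perturbation argument must be carried out carefully so that the implicit constants in the small-data and stability estimates stay uniform across the profiles and in $n$ and $J$. The transfer of the $L^{p+1}$ orthogonality through the $H^{1}$-approximation is a secondary technical point that again relies on the parameter orthogonality (3).
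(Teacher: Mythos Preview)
Your proposal is correct and follows exactly the standard route (linear profile decomposition, then construction of nonlinear profiles via wave operators, then transfer of the Pythagorean and smallness properties through the $H^1$-closeness $\NLS(-t_n^j)\psi^j\approx e^{-it_n^j\Delta}\phi^j$). The paper itself does not prove this lemma at all: it simply cites \cite[Proposition~3.6]{Gue14} and \cite[Proposition~6.5]{Guo16}, whose arguments are precisely the ones you outline.
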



We also have the following $\dot{H}^{1}$-orthogonality along the flow. See \cite[Lemma 3.9]{Gue14} and \cite[Lemma 6.7]{Guo16}

\begin{lemma}[$\dot{H}^{1}$ Pythagorean decomposition along the NLS flow]
Let $\{\varphi_{n}\}_{n \in \mathbb{N}}$ be a bounded sequence in $H^{1}(\mathbb{R}^{d})$. Fix time $T \in (0,\infty)$ arbitrarily. Assume that $u_{n}(t):=\NLS(t)\varphi_{n}$ exists on $[0,T]$ for all $n\in \mathbb{N}$ and 
\begin{align*}
	\sup_{n \in \mathbb{N}} \|\nabla u_{n}(t)\|_{L^{\infty}L^{2}[0,T]} < \infty.
\end{align*}
Let $W_{n}^{J}(t):=\NLS(t)W_{n}^{J}$, where $W_{n}^{J}$ is as in Lemma \ref{lem4.1} and $W_n^J(t)$ is global if $J,n$ are large. If $J$ is sufficiently large, then, for all $j$, $v^{j}(t-t_n^j):=\NLS(t-t_n^j)\psi^{j}$ exists on $[0,T]$ for large $n$ and 
\begin{align}
\label{eq4.7}
	\|\nabla u_{n}(t)\|_{L^{2}}^{2} =\sum_{j=1}^{J} \|\nabla v^{j}(t-t_{n}^{j})\|_{L^{2}}^{2} +\|\nabla W_{n}^{J}(t)\|_{L^{2}}^{2} +o_{n}(1)
\end{align}
for all $t \in [0,T]$, where $o_{n}(1) \to 0$ uniformly on $[0,T]$. We also have
\begin{align}
\label{eq4.8}
	\|\varphi_{n}\|_{L^{p+1}}^{p+1}
	=\sum_{j=1}^{J} \|v^{j}(t-t_{n}^{j})\|_{L^{p+1}}^{p+1}
	+\|W_{n}^{J}(t)\|_{L^{p+1}}^{p+1} + o_{n}(1) 
\end{align}
for $ t\in [0,T]$. 
\end{lemma}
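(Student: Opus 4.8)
The plan is to prove the two Pythagorean expansions \eqref{eq4.7} and \eqref{eq4.8} by combining the linear profile decomposition (Lemma \ref{lem4.1}) with a perturbation/stability argument for \eqref{NLS}, exactly in the spirit of Guevara \cite{Gue14} and Guo \cite{Guo16}. First I would fix $J$ large (to be chosen) and set $v^{j}(t) := \NLS(t)\psi^{j}$; by part (2) of Lemma \ref{lem4.1}, for each $j$ the profile $v^{j}$ has a finite $S(\dot H^{s_c})$ norm on the relevant time half-line containing $0$, so whenever $t_{n}^{j}\to\pm\infty$ the shifted solution $v^{j}(t-t_{n}^{j})$ is a global, scattering solution with small $S(\dot H^{s_c})$ norm on $[0,T]$ for $n$ large. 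For the (at most finitely many) indices with $t_{n}^{j}=0$, the hypothesis $\sup_{n}\|\nabla u_{n}(t)\|_{L^{\infty}L^{2}[0,T]}<\infty$ together with the energy/profile orthogonality in part (5) (which forces $\|\nabla\psi^{j}\|_{L^2}$ to be uniformly bounded and $\sum_{j}E(\psi^{j})$ to converge) shows $\psi^{j}\in H^{1}$ with the solution $v^{j}$ existing on $[0,T]$, so in all cases each $v^{j}(t-t_{n}^{j})$ is well-defined on $[0,T]$ for $n$ large.

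Next I would build the approximate solution $\tilde u_{n}^{J}(t):=\sum_{j=1}^{J}v^{j}(t-t_{n}^{j})+\NLS(t)W_{n}^{J}$ and check that it satisfies \eqref{NLS} up to an error that is small in the dual Strichartz space. The error has two sources: the cross terms $|\sum_{j}v^{j}+W_{n}^{J}|^{p-1}(\cdots)-\sum_{j}|v^{j}|^{p-1}v^{j}$, which vanish as $n\to\infty$ by the space-time orthogonality in part (3) (the profiles separate in $t$ or $x$), and the contribution of the remainder, which is small for $J$ large by part (4). A standard long-time perturbation (stability) lemma for the $L^{2}$-supercritical NLS — using that the finitely many genuinely nonlinear pieces have bounded Strichartz norms on $[0,T]$ and the rest have small Strichartz norms — then yields $\|u_{n}-\tilde u_{n}^{J}\|_{L^{\infty}H^{1}[0,T]}=o_{n}(1)$. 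From this approximation, \eqref{eq4.7} and \eqref{eq4.8} follow by expanding $\|\nabla\tilde u_{n}^{J}(t)\|_{L^2}^2$ and $\|\tilde u_{n}^{J}(t)\|_{L^{p+1}}^{p+1}$ and invoking the space-time orthogonality to kill cross terms (here the $\dot H^{1}$ and $L^{p+1}$ orthogonality along the flow, as in \cite[Lemma 3.9]{Gue14}); uniformity of $o_{n}(1)$ on $[0,T]$ comes from the uniform-in-$[0,T]$ control in the stability estimate and the compactness of $[0,T]$.

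The main obstacle is the interplay between the a priori kinetic-energy bound and the perturbation argument: a priori one does not know that every $v^{j}(t-t_{n}^{j})$ exists on all of $[0,T]$, only that $u_{n}$ does with bounded $\dot H^{1}$ norm. The standard device (which I would follow) is a bootstrap in $J$ and in a partition of $[0,T]$: one first shows that on any subinterval where the total approximate solution has controlled norms the perturbation closes, then uses the energy/Pythagorean bound to propagate this control, ruling out finite-time blow-up of the individual profiles on $[0,T]$; the $L^{2}$-supercriticality is what makes the Strichartz bookkeeping (the admissible pairs in $\Lambda_{s_{c}}$) go through. Since this is precisely the argument carried out in \cite{Gue14,Guo16}, I would state the lemma as a direct adaptation of those references and indicate the changes needed for our exponents $(d,p)$ rather than reproducing the full stability machinery.
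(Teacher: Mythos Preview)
Your proposal is correct and matches the paper's treatment: the paper does not give a proof of this lemma at all, but simply cites \cite[Lemma 3.9]{Gue14} and \cite[Lemma 6.7]{Guo16}, which is precisely what you conclude one should do. Your sketch accurately summarizes the argument in those references (approximate solution built from nonlinear profiles, long-time perturbation, orthogonality to kill cross terms, and the bootstrap to propagate existence of the individual $v^{j}$ on $[0,T]$), so nothing further is needed.
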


%
%

The following lemma is essentially given by \cite[Lemma 4.7]{Gue14} and \cite[Lemma 6.7]{Guo16}. 

\begin{lemma}[Profile reordering]
\label{lem4.3}
Let $\{\varphi_{n}\}_{n \in \mathbb{N}}$ be a bounded sequence in $H^{1}(\mathbb{R}^{d})$. Assume that $M(\varphi_{n})=M(Q)$, $E(\varphi_{n})=E(Q)$, and $\|\nabla \varphi_{n}\|_{L^{2}}/\|\nabla Q\|_{L^{2}} >1$ for all $n$. Then the profiles can be reordered such that there exist $1\leq J_{1} \leq J_{2} \leq J$ and 
\begin{enumerate}
\item For each $1 \leq j \leq J_{1}$, we have $t_{n}^{j}=0$ and $v^{j}(t):=\NLS(t)\psi^{j}$ does not scatter as $t\to \infty$. 
\item For each $J_{1}+1 \leq j \leq J_{2}$, we have $t_{n}^{j}=0$ and $v^{j}(t)$ scatters as $t\to \infty$. 
\item For each $J_{2}+1 \leq j \leq J$, we have $|t_{n}^{j}|\to \infty$. (In particular, $v^{j}(t)$ scatters in at least one directions by (2) in Lemma \ref{lem4.1}. )
\end{enumerate}
Moreover, $J_{1}\geq 1$. ($J_{2}=J_{1}$ or $J_{2}=J$ may occur. In that case, there is no $j$ satisfying the second and third statement, respectively.)
\end{lemma}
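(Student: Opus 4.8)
## Proof Proposal for Lemma \ref{lem4.3} (Profile reordering)

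The plan is to start from the nonlinear profile decomposition of Lemma \ref{lem4.1} applied to $\{\varphi_n\}$, and then perform a finite reordering of the index set $\{1,\dots,J\}$ (for each fixed $J$, consistently) so that the three classes in the statement appear as consecutive blocks. First I would separate the profiles according to property (1) of Lemma \ref{lem4.1}: either $t_n^j = 0$ for all $n$, or $|t_n^j|\to\infty$. Put all the $|t_n^j|\to\infty$ profiles at the end; these will form the block $J_2+1\le j\le J$, and property (2) of Lemma \ref{lem4.1} immediately gives that the corresponding $v^j(t)=\NLS(t)\psi^j$ scatters in at least one time direction. Among the remaining profiles with $t_n^j\equiv 0$, I would split into those for which $v^j(t)$ scatters as $t\to+\infty$ (these become the middle block $J_1+1\le j\le J_2$) and those for which it does not (the first block $1\le j\le J_1$). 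Reordering finitely many profiles does not affect any of the orthogonality or smallness statements in Lemma \ref{lem4.1}, so all the conclusions of that lemma persist after relabeling.

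The substantive point is the final assertion $J_1\ge1$: at least one profile with $t_n^j\equiv0$ must fail to scatter forward. This is where I expect the main work to lie, and the argument should go by contradiction. Suppose $J_1=0$, i.e. every profile either has $|t_n^j|\to\infty$ or has $t_n^j\equiv0$ with $v^j$ scattering forward. Then for each $j$, $\NLS(-t)\psi^j$ has finite $S(\dot H^{s_c};[0,\infty))$ norm — for the $|t_n^j|\to\infty$ profiles this is property (2) of Lemma \ref{lem4.1} in the case $t_n^j\to+\infty$, and requires a small remark (the local theory / persistence of regularity, or reflecting the profile) when $t_n^j\to-\infty$; for the $t_n^j\equiv0$ profiles it is the forward-scattering hypothesis. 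Combined with the smallness of the remainder, property (4), a standard perturbation/stability argument for \eqref{NLS} (as used by Guevara \cite{Gue14} and Guo \cite{Guo16}) yields that $u_n(t)=\NLS(t)\varphi_n$ is global forward in time with uniformly bounded $S(\dot H^{s_c};[0,\infty))$ norm, and in particular scatters forward for all large $n$.

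To reach a contradiction from "all $\varphi_n$ scatter forward," I would invoke the characterization of the scattering/blow-up dichotomy. Under $M(\varphi_n)=M(Q)$ and $E(\varphi_n)=E(Q)$, the strict inequality $\|\nabla\varphi_n\|_{L^2}>\|\nabla Q\|_{L^2}$ forces $K(\varphi_n)<0$ (via the Pohozaev identity, as recorded in Section \ref{sec3}: $K(u)=\tfrac{d(p-1)-4}{4}\mu(u)$ and $\mu(\varphi_n)<0$), and moreover $K(\NLS(t)\varphi_n)<0$ is preserved along the flow by Lemma~2.2(3). A solution with $K$ staying negative (and mass-energy at the threshold) cannot scatter, since scattering forces $\|\nabla u(t)\|_{L^2}\to 0$ along a sequence $t\to\infty$ — contradicting $\|\nabla u(t)\|_{L^2}^2 \ge \|\nabla Q\|_{L^2}^2$, which follows from $\mu(u(t))<0$. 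Hence the assumption $J_1=0$ is untenable, so $J_1\ge1$. The cases $J_2=J_1$ or $J_2=J$ are allowed and simply mean one of the latter two blocks is empty; no further argument is needed there. The only delicate bookkeeping is to make the reordering simultaneous across all truncation levels $J$ (choose a single permutation of $\mathbb{N}$ respecting the three classes, legitimate since class membership of a profile does not depend on $J$), and to handle the $t_n^j\to-\infty$ profiles carefully when asserting forward Strichartz bounds — that is the step most likely to need an extra sentence of justification.
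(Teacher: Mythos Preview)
Your reordering step is fine, but the contradiction argument for $J_{1}\geq 1$ has a real gap, and it concerns precisely the case you flagged as easy. For the long-time perturbation argument to show that $u_{n}=\NLS(t)\varphi_{n}$ scatters forward, you need each shifted profile $v^{j}(t-t_{n}^{j})$ to have finite $S(\dot H^{s_{c}})$ norm on $[0,\infty)$, i.e.\ $v^{j}$ must be controlled on $[-t_{n}^{j},\infty)$. When $t_{n}^{j}\to -\infty$ this interval is contained in $[0,\infty)$ and property (2) of Lemma~\ref{lem4.1} gives exactly the forward scattering you need --- that case is fine. The problematic case is $t_{n}^{j}\to +\infty$: then $[-t_{n}^{j},\infty)$ is essentially all of $\R$, and property~(2) only yields \emph{backward} scattering of $v^{j}$, so you have no Strichartz control on $[0,\infty)$, and the perturbation argument does not close. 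You have the direction of difficulty reversed, and the ``small remark'' you envisage will not patch this. A second, smaller issue: scattering does not force $\|\nabla u(t)\|_{L^{2}}\to 0$; it forces $\|u(t)\|_{L^{p+1}}\to 0$, and then energy conservation gives $\|\nabla u(t)\|_{L^{2}}^{2}\to 2E(Q)<\|\nabla Q\|_{L^{2}}^{2}$, which is what actually contradicts $\mu(u(t))<0$.

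The paper avoids the perturbation machinery entirely and works instead with the $L^{p+1}$ Pythagorean decompositions. It first shows that at least one profile has $t_{n}^{j}=0$ (otherwise every $\|\NLS(-t_{n}^{j})\psi^{j}\|_{L^{p+1}}\to 0$, contradicting $\|\varphi_{n}\|_{L^{p+1}}>\|Q\|_{L^{p+1}}$ via the orthogonality in Lemma~\ref{lem4.1}(5)). Then, assuming every $t_{n}^{j}=0$ profile scatters forward, it picks a large time $t_{0}$ and applies the Pythagorean decomposition \emph{along the flow} \eqref{eq4.8}: the $t_{n}^{j}=0$ terms are small at $t_{0}$ by the scattering hypothesis, and for the $|t_{n}^{j}|\to\infty$ terms one has $\|v^{j}(t_{0}-t_{n}^{j})\|_{L^{p+1}}\to 0$ because $t_{0}-t_{n}^{j}$ goes to exactly the time direction in which property~(2) guarantees scattering. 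This contradicts $\|u_{n}(t_{0})\|_{L^{p+1}}>\|Q\|_{L^{p+1}}$. The point is that the paper only needs $L^{p+1}$-smallness in the direction $t_{0}-t_{n}^{j}\to \mp\infty$, which property~(2) always provides, whereas your stability route demands full forward Strichartz control that is simply not available for $t_{n}^{j}\to+\infty$.
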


\begin{proof}
Reordering as $t_{n}^{j}=0$ for $1\leq j \leq J_{2}$ and $|t_{n}^{j}|\to \infty$ for $J_{2}+1\leq j \leq J$ is done by suitable numbering. We show $J_{1}\geq 1$.

\textbf{Step 1.} We show that there exists at least one $j$ such that $t_{n}^{j}=0$. Suppose that $|t_{n}^{j}| \to \infty$ for all $j$. By the Pohozaev identity, $E(\varphi_{n})=E(Q)$, and $\|\nabla \varphi_{n}\|_{L^{2}}/\|\nabla Q\|_{L^{2}}>1$, we have
\begin{align*}
	\frac{\|\varphi_{n}\|_{L^{p+1}}^{p+1}}{\|Q\|_{L^{p+1}}^{p+1}}
	=-\frac{1}{2} \frac{E(\varphi_{n})}{E(Q)} + \frac{3}{2} \frac{\|\nabla \varphi_{n}\|_{L^{2}}^{2}}{\|\nabla Q\|_{L^{2}}^{2}}
	\geq -\frac{1}{2} + \frac{3}{2} > 1. 
\end{align*}
Since $|t_{n}^{j}| \to \infty$, we have $\|\NLS(-t_{n}^{j})\psi^{j}\|_{L^{p+1}} \to 0$ as $n \to \infty$ by (2) in Lemma \ref{lem4.1}. Then we have
\begin{align*}
	1\leq \frac{\|\varphi_{n}\|_{L^{p+1}}^{p+1}}{\|Q\|_{L^{p+1}}^{p+1}}
	=\sum_{j=1}^{J}\frac{\|\NLS(-t_{n}^{j})\psi^{j}\|_{L^{p+1}}^{p+1}}{\|Q\|_{L^{p+1}}^{p+1}} + \frac{\|W_{n}^{J}\|_{L^{p+1}}^{p+1}}{\|Q\|_{L^{p+1}}^{p+1}} + o_{n}(1)
\end{align*}
and the right hand side is smaller than 1 for large $n,J$. This is a contradiction. 

\textbf{Step 2.} It remains to prove that there exists at least one $1\leq j \leq J_{2}$ such that $v^{j}(t):=\NLS(t)\psi(t)$ does not scatter in the positive time direction. Suppose the statement fails. Then we have $\lim_{t\to \infty} \|v^{j}(t)\|_{L^{p+1}}=0$ for all $1\leq j \leq J_{2}$ since they scatter. Take $t_{0}$ sufficiently large such that $\|v^{j}(t_{0})\|_{L^{p+1}}^{p+1}\leq \varepsilon /J_{2}$ for all $1\leq j \leq J_{2}$. Let $u_{n}(t):=\NLS(t)\varphi_{n}$. Then the variational structure implies $\|u_{n}(t)\|_{L^{p+1}}/\|Q\|_{L^{p+1}} > 1$ for all $n$ and $t$. 
By the orthogonality \eqref{eq4.8}, we get
\begin{align*}
	\|Q\|_{L^{p+1}}^{p+1} &\leq \|u_{n}(t_{0})\|_{L^{p+1}}^{p+1}
	\\
	& = \sum_{j=1}^{J_{2}} \|v^{j}(t_{0})\|_{L^{p+1}}^{p+1} + \sum_{j=J_{2}+1}^{J} \|v^{j}(t_{0}-t_{n}^{j})\|_{L^{p+1}}^{p+1} +\|W_{n}^{J}(t_{0})\|_{L^{p+1}}^{p+1} +o_{n}(1)
	\\
	& \leq \varepsilon + \sum_{j=J_{2}+1}^{J} \|v^{j}(t_{0}-t_{n}^{j})\|_{L^{p+1}}^{p+1} +\|W_{n}^{J}(t_{0})\|_{L^{p+1}}^{p+1} +o_{n}(1).
\end{align*}
Taking large $J$ and the limit $n\to \infty$, the right hand side goes to $\varepsilon$. Since $\varepsilon$ is sufficiently small, this is a contradiction. 
\end{proof}

%

\section{Proof of Main Theorem}

\subsection{Compactness of the uniformly bounded solution}

%
%

In what follows, let $u_{0}$ satisfy \eqref{MEN} and $u$ be a solution to \eqref{NLS} on at least $[0,\infty)$ satisfying $\mathbf{A}:=\sup_{t>0}\|\nabla u(t)\|_{L^{2}}^{2}<\infty$. In this section, we show that the solution $u$ has a compactness property. 

\begin{proposition}
\label{prop5.1}
Then for any time sequence $\{t_{n}\}$, there exists a subsequence, which is still denoted by $\{t_{n}\}$, a spacial sequence $\{x_{n}\}$, and a function $\psi \in H^{1}(\mathbb{R}^{d})$ such that $u(t_{n}, \cdot + x_{n})$ converges to $\psi$ in $H^{1}(\mathbb{R})$. 
\end{proposition}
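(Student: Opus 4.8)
The plan is to apply the nonlinear profile decomposition to $\varphi_n:=u(t_n)$ and show that a single profile survives, carrying the full mass and energy of $Q$, with an $H^1$-null remainder. First note that $\{\varphi_n\}$ is bounded in $H^1$ (mass conservation and $\mathbf{A}<\infty$), that $M(\varphi_n)=M(Q)$ and $E(\varphi_n)=E(Q)$ by conservation, and that $\|\nabla\varphi_n\|_{L^2}>\|\nabla Q\|_{L^2}$: since $K(u_0)<0$ we have $K(u(t))<0$ for all $t$ (the variational lemma of Section~2), hence $\mu(u(t))<0$ via the Pohozaev relation $K=\tfrac{d(p-1)-4}{4}\mu$. (If $\{t_n\}$ is bounded the conclusion is immediate from continuity of $t\mapsto u(t)$ in $H^1$, so assume $t_n\to\infty$.) Thus Lemma~\ref{lem4.1} and the reordering Lemma~\ref{lem4.3} apply: after passing to a subsequence we obtain profiles $\psi^j$ with parameters $(t_n^j,x_n^j)$, remainders $W_n^J$, and finitely many non-scattering profiles, indexed $j=1,\dots,J_1$ with $J_1\geq1$, all having $t_n^j=0$.

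The key step is to show that each non-scattering profile $v^j:=\NLS(\cdot)\psi^j$, $1\leq j\leq J_1$, is global forward with $\sup_{t>0}\|\nabla v^j(t)\|_{L^2}^2\leq\mathbf{A}$. This follows from the $\dot H^1$-Pythagorean identity along the flow \eqref{eq4.7}: on any $[0,T]$ the left-hand side $\|\nabla u_n(t)\|_{L^2}^2=\|\nabla u(t_n+t)\|_{L^2}^2$ is bounded by $\mathbf{A}$, so (using $t_n^j=0$) the summand $\|\nabla v^j(t)\|_{L^2}^2$ it contains satisfies $\sup_{t\in[0,T]}\|\nabla v^j(t)\|_{L^2}^2\leq\mathbf{A}+o_n(1)$; as $v^j$ is independent of $n$ this gives the bound on $[0,T]$, and $T$ is arbitrary (no finite-time forward blow-up, by the blow-up criterion). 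A solution that is global forward, $\dot H^1$-bounded, and does not scatter forward cannot satisfy $M(\psi^j)^{\frac{1-s_c}{s_c}}E(\psi^j)<M(Q)^{\frac{1-s_c}{s_c}}E(Q)$: the sub-threshold dichotomy would force it either to scatter in both directions or to blow up or grow up in both directions, both excluded. Hence $M(\psi^j)^{\frac{1-s_c}{s_c}}E(\psi^j)\geq M(Q)^{\frac{1-s_c}{s_c}}E(Q)$; since $M(\psi^j)\leq M(Q)$ (the $L^2$-orthogonality in Lemma~\ref{lem4.1}(5) and $M(\varphi_n)=M(Q)$) and $E(Q)>0$ (Pohozaev), this yields $E(\psi^j)\geq E(Q)$ for every $1\leq j\leq J_1$. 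Every other profile scatters in at least one time direction (Lemma~\ref{lem4.3}(2)--(3)), hence $E(\psi^j)\geq0$ (it equals $\tfrac12$ the $\dot H^1$-norm-squared of its asymptotic linear state); and $\NLS(\cdot)W_n^J$, being global with small $S(\dot H^{s_c})$-norm for $J$ large, scatters, so $E(W_n^J)\geq0$.

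Now the energy expansion of Lemma~\ref{lem4.1}, $E(Q)=\sum_{j=1}^{J}E(\psi^j)+E(W_n^J)+o_n(1)$, taken with $n\to\infty$ and then $J\to\infty$, gives $E(Q)=\sum_j E(\psi^j)+\lim_n E(W_n^J)$ with all terms $\geq0$; with the bounds above, $E(Q)\geq\sum_{j=1}^{J_1}E(\psi^j)\geq J_1E(Q)$, forcing $J_1=1$, and then $E(\psi^1)=E(Q)$, $E(\psi^j)=0$ for $j\geq2$, and $\lim_n E(W_n^J)\to0$ as $J\to\infty$. Reinserting $E(\psi^1)=E(Q)$ into $M(\psi^1)^{\frac{1-s_c}{s_c}}E(\psi^1)\geq M(Q)^{\frac{1-s_c}{s_c}}E(Q)$ gives $M(\psi^1)\geq M(Q)$, hence $M(\psi^1)=M(Q)$, and then the $L^2$-expansion forces $\psi^j=0$ for all $j\geq2$ and $\lim_n M(W_n^J)=0$. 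Writing $r_n:=\varphi_n-\psi^1(\cdot-x_n^1)$ (legitimate since $t_n^1=0$ and all other profiles vanish), we have $M(r_n)\to0$ and $E(r_n)\to0$ while $\|\nabla r_n\|_{L^2}$ stays bounded; Gagliardo--Nirenberg, whose exponent $p+1-\tfrac{d(p-1)}{2}$ on $\|r_n\|_{L^2}$ is positive precisely because $p<\frac{d+2}{d-2}$, then gives $\|r_n\|_{L^{p+1}}\to0$, whence $\|\nabla r_n\|_{L^2}\to0$ from $E(r_n)\to0$. Therefore $u(t_n,\cdot+x_n^1)=\psi^1+r_n(\cdot+x_n^1)\to\psi^1$ in $H^1$, and we take $x_n:=x_n^1$, $\psi:=\psi^1$.

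The main obstacle is the second step: leveraging the single uniform bound $\mathbf{A}$ to show that the non-scattering nonlinear profiles are themselves global forward and $\dot H^1$-bounded, so that the sub-threshold dichotomy applies and pins their mass--energy at or above that of $Q$ --- this is exactly what makes the energy count collapse to one profile. It relies on the $\dot H^1$-Pythagorean decomposition along the NLS flow (a consequence of long-time perturbation theory for the profiles), together with the elementary but essential fact that $M(f)\leq M(Q)$ and $M(f)^{\frac{1-s_c}{s_c}}E(f)\geq M(Q)^{\frac{1-s_c}{s_c}}E(Q)$ force $E(f)\geq E(Q)$.
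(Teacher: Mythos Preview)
Your proof is correct and uses the same essential ingredients as the paper's: the nonlinear profile decomposition (Lemma~\ref{lem4.1}), the reordering (Lemma~\ref{lem4.3}), the $\dot H^1$-Pythagorean identity along the flow \eqref{eq4.7}, and the sub-threshold dichotomy. The organization differs slightly---you first show every non-scattering profile is forward global and $\dot H^1$-bounded by $\mathbf{A}$, deduce none are strictly sub-threshold, and then count energies to force $J_1=1$, whereas the paper singles out one profile with $M\leq M(Q)$, $E\leq E(Q)$ and rules out strict inequality by contradiction via \eqref{eq4.7} at a single time---but the substance is the same.
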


\begin{proof}
Apply  Lemma \ref{lem4.1} with $\varphi_{n}=u(t_{n})$,
and reordering Lemma~\ref{lem4.3}.
Since for $j\in [J_1+1,J]$, $v^{j}$ scatters in at least one direction, $E(\psi^{j})=E(v^{j}) \geq 0$. Thus by the energy decoupling we get
\begin{align*}
	\sum_{j=1}^{J_{1}} E(\psi^{j}) \leq E(u(t_{n}))+o_{n}(1)=E(Q)+o_{n}(1)
	\to E(Q).
\end{align*}
Thus there exists $j\in [1,J_{1}]$ such that $E(\psi^{j})\leq E(Q)$. We may assume that $j=1$. We also have
\begin{align*}
	M(\psi^{1})\leq \sum_{j=1}^{J_{1}} M(\psi^{j}) \leq M(u(t_{n}))+o_{n}(1)=M(Q)+o_{n}(1)
	\to M(Q)
\end{align*}
and thus $M(\psi^{1})\leq M(Q)$. 
By Lemma \ref{lem4.3}, $v^{1}$ is a  forward non-scattering solution. 

\textbf{Case 1.} $M(\psi^{1})< M(Q)$ or $E(\psi^{1})< E(Q)$. 

In this case, we have $M(\psi^{1})^{1-s_{c}}E(\psi^{1})^{s_{c}}< M(Q)^{1-s_{c}}E(Q)^{s_{c}}$. 
It follows from \cite{FXC11,AkNa13,Gue14} that $K(\psi^{1})<0$, and that $v^{1}$ blows up or grows up in both directions by \cite{AkNa13,Gue14,DWZ16,Guo16}. There exists $T_{0}>0$ such that $\|\nabla v^{1}(T_{0})\|_{L^{2}}^{2} \geq 2\mathbf{A}$. By \eqref{eq4.7}, we get 
\begin{align*}
	\mathbf{A} \geq \|\nabla u(T_{0}+t_{n})\|_{L^{2}}^{2} 
	&= \sum_{j=1}^{J} \|\nabla v^{j}(T_{0}-t_{n}^{j})\|_{L^{2}}^{2} + \|\nabla W_{n}^{J}(T_{0})\|_{L^{2}}^{2} +o_{n}(1)
	\\
	&\geq \|\nabla v^{1}(T_{0})\|_{L^{2}}^{2} + o_{n}(1)
	\\
	&\geq 2\mathbf{A} +o_{n}(1),
\end{align*}
where we note that $t_{n}^{1}=0$. This is a contradiction.

\textbf{Case 2.} $M(\psi^{1})= M(Q)$ and $E(\psi^{1})= E(Q)$. 


Since we have $M(\psi^{1})=M(Q)$, by the orthogonality of $L^{2}$-norm, we get $\psi^{j}=0$ for all $j\geq 2$. That is, we have $u(t_{n})=\tau_{x_{n}^{1}}\psi^{1} +W_{n}^{1}$. The orthogonality of $L^{2}$-norm also shows that $\|W_{n}^{1}\|_{L^{2}} \to 0$. Moreover, we have
\begin{align*}
	E(Q)=E(u)
	&=E(\psi^{1})+E(W_{n}^{1}) +o_{n}(1)
	\\
	&= E(Q)+E(W_{n}^{1}) +o_{n}(1).
\end{align*}
By this and (4) in Lemma \ref{lem4.1}, it holds that $\|\nabla W_{n}^{1}\|_{L^{2}} \to 0$ as $n \to \infty$. We get 
\begin{align*}
	\|u(t_{n},\cdot+x_{n}^{1}) - \psi^{1}\|_{H^{1}} =\|W_{n}^{1}\|_{H^{1}}\to 0
\end{align*}
as $n\to\infty$. 
\end{proof}

Proposition \ref{prop5.1} implies that there exists $\widetilde{x}:(0,\infty) \to \mathbb{R}^{d}$ such that 
\begin{align*}
	\{u(t,\cdot+\widetilde{x}(t)):t>0\}
\end{align*}
is precompact in $H^{1}(\mathbb{R}^{d})$. This is equivalent to the following: 
for any $\varepsilon>0$, there exists $R=R(\varepsilon)>0$ such that
\begin{align*}
	\int_{|x-\widetilde{x}(t)|>R} |\nabla u(t,x)|^{2} + |u(t,x)|^{2} dx \leq \varepsilon
\end{align*}
for all $t>0$. 

We define $X$ by 
\begin{align*}
	X(t)=
	\begin{cases}
	\widetilde{x}(t), & (t \not\in I_{\mu_{0}}),
	\\
	x(t), & (t \in I_{\mu_{0}}),
	\end{cases}
\end{align*}
where $x$ is the parameter appearing in the modulation argument, Section \ref{sec3}. 
Then, we also find that 
for any $\varepsilon>0$ there exists $R=R(\varepsilon)>0$ such that
\begin{align*}
	\int_{|x-X(t)|>R} |\nabla u(t,x)|^{2} + |u(t,x)|^{2} dx \leq \varepsilon
\end{align*}
for all $t>0$. See the statement below the sketch of the proof of Lemma 6.2 in \cite{DuRo10}. 

\subsection{Finite variance through convergence}

In this section, we show that $u$ is of finite variance. To show this, we first prove that $u$ converges to the ground state. The argument is same as in Section 6.3 in \cite{DuRo10}. See also \cite{CFR20}.

%
%
%
%
%
%

\begin{lemma}
\label{lem5.6}
For small $\varepsilon>0$, there exists a constant $R_{\varepsilon}$ such that 
\begin{align*}
	|A_{R}(u(t))| \cleq \varepsilon |\mu(t)|
\end{align*}
for $t>0$ and $R>R_{\varepsilon}+|X(t)|$. 
\end{lemma}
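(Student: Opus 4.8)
The goal is to bound the error term $A_R(u(t))$ in the localized virial identity by $\varepsilon|\mu(t)|$, uniformly for $t>0$, once $R$ is large enough relative to $|X(t)|$. The plan is to exploit the compactness property established just above the lemma: the orbit $\{u(t,\cdot+X(t))\}_{t>0}$ is precompact in $H^1$, so its mass and gradient tails outside a ball of radius $R$ (centered at $X(t)$) are uniformly small, say $\leq\delta(R)$ with $\delta(R)\to 0$ as $R\to\infty$.

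First I would recall the structure of $A_R(u(t))$: it is supported in the annulus $|x|\geq R$, and its three pieces are controlled by $\|\varphi\|_{C^4}$-type constants times, respectively, $\int_{|x|\geq R}|\nabla u|^2$, $\int_{|x|\geq R}|u|^{p+1}$, and $\int_{|x|\geq R}|u|^2$. Using the properties $\varphi''\leq 2$ and $\varphi=0$ for $r\geq 3$, the weights $4\partial_r^2\varphi_R-8$, $2d-\Delta\varphi_R$, and $\Delta^2\varphi_R$ are all bounded uniformly in $R$ (the last two with the usual scaling: $\Delta\varphi_R$ is bounded, $\Delta^2\varphi_R=O(R^{-2})$). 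For the $L^{p+1}$ piece I would interpolate via Gagliardo--Nirenberg applied on the exterior region, using the uniform bound $\mathbf A$ on $\|\nabla u\|_{L^2}^2$, so that $\int_{|x|\geq R}|u|^{p+1}\lesssim (\int_{|x|\geq R}|u|^2)^{\alpha}\mathbf A^{\beta}$ for appropriate exponents, hence also controlled by the $L^2$ tail. The upshot is
\begin{align*}
	|A_R(u(t))| \cleq \int_{|x|\geq R}\left(|\nabla u(t,x)|^2+|u(t,x)|^2\right)dx.
\end{align*}

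Next comes the key point: for $t\in I_{\mu_0}$ we must gain the factor $|\mu(t)|$. Here I would use the modulation description $u(t,x+X(t))=e^{i\theta(t)+it}(Q+g(t))$ with $\|g(t)\|_{H^1}\approx|\mu(t)|$ from Lemma \ref{lem3.3}. Split $u=e^{i\theta+it}Q(\cdot-X(t))+e^{i\theta+it}g(\cdot-X(t))$. The $Q$-part contributes $\int_{|x-X(t)|\geq R}(|\nabla Q|^2+Q^2)$, which decays super-polynomially (indeed exponentially) in $R$ by the exponential decay of $Q$; choosing $R_\varepsilon$ large makes this $\leq\varepsilon\mu_0\leq\varepsilon|\mu(t)|$ only if we are careful — actually since $|\mu(t)|$ can be arbitrarily small this naive bound fails. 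The correct approach: the exterior tail of the full solution, by compactness, is $\leq\delta(R)$; but when $t\in I_{\mu_0}$ the solution is within $\varepsilon(\mu(t))$ of a translate of $Q$ in $H^1$, and the exterior tail of $Q$ is as small as we like by taking $R$ large, so the genuine tail of $u$ outside $|x-X(t)|\geq R$ is $\lesssim \|g(t)\|_{H^1}^2 + (\text{tail of }Q)\lesssim |\mu(t)|^2+o_R(1)$. Since $|\mu(t)|\leq \mu_0$ is bounded we have $|\mu(t)|^2\leq\mu_0|\mu(t)|$, and the $Q$-tail, being independent of $t$, must be absorbed — so actually one needs $|A_R(u(t))|\lesssim |\mu(t)|^2 + o_R(1)\cdot\mathbf A$, and to make the second term $\leq\varepsilon|\mu(t)|$ we would again be stuck. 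The resolution, which I expect the authors use, is to treat $I_{\mu_0}$ and its complement separately: for $t\notin I_{\mu_0}$ we have $|\mu(t)|\geq\mu_0$, so any bound $|A_R|\lesssim\delta(R)\leq\varepsilon\mu_0\leq\varepsilon|\mu(t)|$ works once $R>R_\varepsilon$; for $t\in I_{\mu_0}$ we use the $Q$-proximity to write $A_R(u(t))=A_R(e^{i\theta+it}Q(\cdot-X(t)))+[\text{cross terms and }g\text{-terms}]$, observe that $A_R$ evaluated on a pure translated $Q$ is \emph{not} zero but decays in $R$ — and here is the real trick — one does not bound $A_R(Q)$ by a constant but rather one must recognize that it should be compared against $\mu$ via a better argument, likely using that $A_R$ is quadratic-type in the "far field" and the far field of $u$ is governed by $g$ plus the exponentially-small tail of $Q$, so taking $R_\varepsilon$ large relative to $\log(1/\varepsilon)$ kills the $Q$-tail below $\varepsilon^2$ and hence below $\varepsilon|\mu(t)|$ whenever $|\mu(t)|\geq\varepsilon$; and for the remaining regime $|\mu(t)|<\varepsilon$ one exploits that $t$ is then deep inside $I_{\mu_0}$ so the $g$-contribution $\lesssim\|g\|_{H^1}^2\lesssim|\mu(t)|^2<\varepsilon|\mu(t)|$ while the $Q$-tail is still the obstruction.

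Therefore the main obstacle — and the step I would focus the most care on — is handling $t\in I_{\mu_0}$ with $|\mu(t)|$ very small: showing that $A_R(u(t))$ genuinely carries a factor of $|\mu(t)|$ (not merely $|\mu(t)|^2$ plus an $R$-dependent-but-$t$-independent error that cannot be made small compared to $|\mu(t)|$). I expect the correct mechanism is that on such $t$ the center $X(t)=x(t)$ is the modulation parameter and the solution is so close to $Q(\cdot - x(t))$ that its mass/gradient tail outside $|x-X(t)|>R$ is actually $\lesssim \|g(t)\|_{H^1}^2+\|\nabla Q\|_{L^2(|x|>R)}^2+\|Q\|_{L^2(|x|>R)}^2$, and then one chooses $R_\varepsilon$ depending on $\varepsilon$ so that $\|Q\|_{H^1(|x|>R_\varepsilon)}^2\leq\varepsilon\mu_0$; combined with $\|g(t)\|_{H^1}^2\lesssim|\mu(t)|^2\leq\mu_0|\mu(t)|$ and $|\mu(t)|\geq\mu_0$ being impossible in $I_{\mu_0}$... — so in fact one must also shrink $\mu_0$, which the lemma permits ("taking $\mu_0$ smaller if necessary" is already built into Lemma \ref{lem3.3}). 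Concretely: choose $\mu_0$ small enough that $\varepsilon(\mu)$ and the Gagliardo--Nirenberg constants make the $g$-terms $\leq\frac{\varepsilon}{2}|\mu(t)|$ for all $t\in I_{\mu_0}$, then choose $R_\varepsilon$ large enough that $\|Q\|_{H^1(|x|>R_\varepsilon)}^2\leq\frac{\varepsilon}{2}\mu_0\leq\frac{\varepsilon}{2}|\mu(t)|$... no — this still fails for small $|\mu(t)|$. The honest statement of the obstacle: making the $t$-independent tail of $Q$ comparable to the $t$-dependent quantity $|\mu(t)|$ forces $R$ to depend on $\inf_t|\mu(t)|$, which could be $0$; the way out must be that $A_R$ applied to a static $Q(\cdot-X(t))$ is identically zero for $R\geq 3$ would be false since $Q$ is not compactly supported — so one instead uses the \emph{exact} virial identity for $Q$ itself: $e^{it}Q$ has $\frac{d^2}{dt^2}\int|x|^2|e^{it}Q|^2 = 8K(Q)=0$ and $\int|x|^2 Q^2<\infty$, hence $A_R(e^{it}Q)=J_R''-8K(Q)=J_R''$ and $J_R(t)$ for $Q$ is constant in $t$ so $J_R''=0$; thus $A_R(Q(\cdot-X(t)))$ equals $8K(Q)$ minus... — in any case, the clean conclusion is that $A_R$ evaluated on $Q$ contributes only through the region $|x|>R$ where $Q$ is exponentially small, and the final bound $|A_R(u(t))|\cleq\varepsilon|\mu(t)|$ follows by taking $R_\varepsilon\gtrsim\log(1/\varepsilon)$ and $\mu_0$ small, with the two-regime ($t\in I_{\mu_0}$ vs.\ $t\notin I_{\mu_0}$) split carried out explicitly. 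I would present it as: (i) reduce $|A_R(u(t))|$ to the exterior $H^1$-tail of $u$; (ii) for $t\notin I_{\mu_0}$ use compactness plus $|\mu(t)|\geq\mu_0$; (iii) for $t\in I_{\mu_0}$ use modulation, exponential decay of $Q$, and $\|g\|_{H^1}^2\lesssim|\mu(t)|^2\leq\mu_0|\mu(t)|$.
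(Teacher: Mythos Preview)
Your overall plan --- reduce $|A_R(u(t))|$ to exterior $H^1$-tails, then split into $t\notin I_{\mu_0}$ (compactness plus $|\mu(t)|\geq\mu_0$) and $t\in I_{\mu_0}$ (modulation) --- is exactly the structure of the argument the paper defers to (Step~1 of Lemma~6.7 in \cite{DuRo10}). You also correctly isolate the genuine difficulty: for $t\in I_{\mu_0}$ with $|\mu(t)|$ arbitrarily small, the exterior tail of $Q(\cdot-X(t))$ is a fixed positive number depending only on $R_\varepsilon$, and no choice of $R_\varepsilon$ alone can make it $\leq\varepsilon|\mu(t)|$ uniformly in $t$.

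The resolution is the identity you brush against mid-paragraph and then abandon: $A_R\bigl(Q(\cdot-X_0)\bigr)=0$ for \emph{every} $R>0$ and every $X_0\in\R^d$. This is not a decay estimate but an exact cancellation, obtained by applying the localized virial identity to the standing wave $e^{it}Q(\cdot-X_0)$: since $|u|^2=|Q(\cdot-X_0)|^2$ is time-independent one has $J_R''\equiv 0$, and since $K(Q)=0$, the identity $J_R''=8K+A_R$ forces $A_R(Q(\cdot-X_0))=0$. (Your hesitation about the translated $Q$ is unfounded: the same argument applies verbatim because $e^{it}Q(\cdot-X_0)$ is itself a solution.) With this in hand, expand $A_R(u(t))$ around $e^{i\theta(t)+it}Q(\cdot-X(t))$: the pure-$Q$ contribution vanishes identically; the cross terms are bounded by $\|Q\|_{H^1(|y|>R_\varepsilon)}\,\|g\|_{H^1}\lesssim e^{-cR_\varepsilon}|\mu(t)|$; and the pure-$g$ terms are $O\bigl(\|g\|_{H^1}^2+\|g\|_{H^1}^{p+1}\bigr)=O(\mu_0|\mu(t)|)$. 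Taking $R_\varepsilon$ large and $\mu_0$ small yields $|A_R(u(t))|\lesssim\varepsilon|\mu(t)|$. Your final step~(iii), which invokes only ``exponential decay of $Q$'' for the pure-$Q$ part, misses this cancellation and therefore leaves the proof incomplete precisely at the point you yourself flagged.
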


\begin{proof}
The proof is similar to Step 1 in the proof of Lemma 6.7 in \cite{DuRo10}. 
\end{proof}

\begin{lemma}
\label{lem5.7}
Let $0<t_{1}<t_{2}<\infty$. Then it holds that
\begin{align*}
	\int_{t_{1}}^{t_{2}} |\mu(u(t))| dt \lesssim  (1+\sup_{t_{1} <t < t_{2}}|X(t)|)\{ |\mu(u(t_{1}))|+|\mu(u(t_{2}))|\}.
\end{align*}
\end{lemma}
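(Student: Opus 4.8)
The plan is to integrate the localized virial identity $J_R''(t) = 8K(u(t)) + A_R(u(t))$ twice over a suitable time window and exploit the sign of $K$. Since $K(u(t)) < 0$ on the whole interval of existence (Lemma in Section 2, part (3)) and, by the Pohozaev relation in Section 3, $K(u(t)) = \frac{d(p-1)-4}{4}\mu(u(t))$ with $d(p-1)-4 > 0$, we have $K(u(t)) \approx \mu(u(t))$ and $\mu(u(t)) < 0$, so $|K(u(t))| \approx |\mu(u(t))|$. Fix $R$ to be used below as a function of $t$; the natural choice is $R = R(t) := R_\varepsilon + 1 + \sup_{t_1 < s < t_2}|X(s)|$, a constant on $[t_1,t_2]$ chosen large enough that Lemma \ref{lem5.6} applies for every $t \in [t_1,t_2]$, giving $|A_R(u(t))| \lesssim \varepsilon|\mu(u(t))|$. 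Then
\begin{align*}
	J_R''(t) = 8K(u(t)) + A_R(u(t)) \leq -c|\mu(u(t))| + C\varepsilon|\mu(u(t))| \leq -\frac{c}{2}|\mu(u(t))|
\end{align*}
for $\varepsilon$ small, so $-J_R''(t) \gtrsim |\mu(u(t))|$ on $[t_1,t_2]$.

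Next I would integrate this differential inequality. Integrating $-J_R'' \gtrsim |\mu(u)|$ once from $t_1$ to $t_2$ gives $\int_{t_1}^{t_2}|\mu(u(t))|\,dt \lesssim J_R'(t_1) - J_R'(t_2) \leq |J_R'(t_1)| + |J_R'(t_2)|$. It then remains to bound $|J_R'(t_i)|$ in terms of the right-hand side of the claimed inequality. From the formula $J_R'(t) = 2\,\im\int \nabla\varphi_R\,\nabla u\,\overline{u}\,dx$ and the fact that $|\nabla\varphi_R(x)| \lesssim R$ with support in $|x| \lesssim R$, Cauchy–Schwarz yields $|J_R'(t)| \lesssim R\,\|\nabla u(t)\|_{L^2}\|u(t)\|_{L^2} \lesssim R$, which however only gives an $O(R)$ bound, not the needed $O(R\,|\mu(u(t_i))|)$. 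The fix is to localize the integral to the region where $u$ is concentrated: split $J_R'(t_i)$ into the part over $|x - X(t_i)| \leq R_\varepsilon$ and the tail. On the tail, precompactness (the $\varepsilon$-$R_\varepsilon$ statement established after Proposition \ref{prop5.1}) makes the contribution $\lesssim R\,\varepsilon^{1/2}$; on the core, one uses that $\nabla\varphi_R(x) = 2x$ for $|x| \leq R$ together with the orthogonality/smallness conditions of the modulation (Lemma \ref{lem3.3}) when $t_i \in I_{\mu_0}$, or a direct estimate otherwise, to extract a factor $|\mu(u(t_i))|$. More precisely one writes $\im\int \nabla\varphi_R\nabla u\,\overline u = \im\int (\nabla\varphi_R - 2X(t_i))\nabla u\,\overline u + 2X(t_i)\cdot P(u)$, uses $|P(u)| \lesssim |\mu(u(t_i))|$ (after reducing to zero momentum by Galilean symmetry, as is standard at the threshold, cf. \cite{DuRo10,CFR20}), and bounds the first term by $R$ times the mass–gradient product restricted near $X(t_i)$, which is $\lesssim |\mu(u(t_i))|$ by Lemma \ref{lem3.3}; the far region is again controlled by precompactness.

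The main obstacle I anticipate is precisely this last step: obtaining $|J_R'(t_i)| \lesssim (1 + |X(t_i)|)\,|\mu(u(t_i))|$ rather than the crude $O(R)$. This requires coupling three ingredients carefully — the precompactness of $\{u(t,\cdot + X(t))\}$ to kill the spatial tails, the modulation estimates of Lemma \ref{lem3.3} to see that near the concentration point the relevant quadratic quantities are themselves $O(|\mu|)$, and the normalization of the momentum to zero so that the $2X(t_i)\cdot P(u)$ term carries the factor $|X(t_i)|\,|\mu(u(t_i))|$ cleanly. One must also check that when $t_i \notin I_{\mu_0}$, i.e. $|\mu(u(t_i))|$ is not small, the inequality is trivially true since the left side is finite and bounded by $\mathbf{A}(t_2 - t_1)$ and the right side is bounded below — but one should be slightly careful that the integral $\int_{t_1}^{t_2}|\mu|$ is taken over an interval that may straddle $I_{\mu_0}$ and its complement, so the argument should really run on subintervals and be patched together. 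Modulo these bookkeeping points, the rest is the routine double integration of the virial inequality described above.
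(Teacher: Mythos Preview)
Your overall strategy matches the paper's exactly: choose $R = R_\varepsilon + \sup_{[t_1,t_2]}|X(t)|$, use Lemma~\ref{lem5.6} to get $|A_R(u(t))| \lesssim \varepsilon|\mu(u(t))|$, deduce $-J_R''(t) \gtrsim |\mu(u(t))|$, integrate once over $[t_1,t_2]$, and then establish $|J_R'(t_i)| \lesssim R\,|\mu(u(t_i))|$. The paper simply cites Step~2 of the proof of Lemma~6.7 in \cite{DuRo10} for this last bound and gives no further detail.

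Your sketch of the $J_R'$ bound, however, is more complicated than necessary and not quite correct as written. The claim that the ``mass--gradient product restricted near $X(t_i)$ is $\lesssim |\mu(u(t_i))|$'' is false: near the concentration point that product is of order one, not $|\mu|$. And the tail contribution via precompactness is only $\lesssim R\,\varepsilon^{1/2}$, which need not be $\lesssim R\,|\mu(u(t_i))|$ when $|\mu|$ is very small. The actual argument (from \cite{DuRo10}) uses neither precompactness nor the momentum reduction. If $|\mu(u(t_i))| \geq \mu_0$ the crude bound $|J_R'(t_i)| \lesssim R$ already gives $|J_R'(t_i)| \lesssim R\,\mu_0^{-1}|\mu(u(t_i))|$. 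If $|\mu(u(t_i))| < \mu_0$, write $u(t_i,\cdot) = e^{i\theta(t_i)+it_i}(Q+g)(\cdot - x(t_i))$ as in Lemma~\ref{lem3.2} and observe that in $J_R'(t_i) = 2\,\im\int\nabla\varphi_R\cdot\nabla u\,\overline{u}\,dx$ the $Q\,\nabla Q$ term vanishes because $Q$ is real, while every remaining term carries at least one factor of $g$ and is bounded by $\|\nabla\varphi_R\|_{L^\infty}\,\|g\|_{H^1}\bigl(\|Q\|_{H^1}+\|g\|_{H^1}\bigr) \lesssim R\,|\mu(u(t_i))|$ via Lemma~\ref{lem3.3}. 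No patching over subintervals is needed.
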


\begin{proof} 
By Lemma \ref{lem5.6}, there exists a constant $R_{\varepsilon}$ such that if $R\geq R_{\varepsilon}+|X(t)|$, then 
\begin{align*}
	|A_{R}(u(t))|\lesssim \varepsilon |\mu(u(t))|. 
\end{align*}
For $\varepsilon>0$ sufficiently small, 
it follows from this estimate and the localized virial identity that
\begin{align*}
	J_{R}''(t)=4K(u(t)) + A_{R}(u(t))
	\leq c \mu(u(t)) - \varepsilon \mu(u(t)) \lesssim \mu(u(t)) <0
\end{align*}
if $R\geq R_{\varepsilon}+|X(t)|$. 
Let $R=R(t_{1},t_{2}):=R_{\varepsilon}+\sup_{t_{1}\leq t\leq t_{2}}|X(t)|$ for arbitrarily $t_{1},t_{2}$. Then we have
\begin{align*}
	0>\int_{t_{1}}^{t_{2}} \mu(u(t)) dt \gtrsim \int_{t_{1}}^{t_{2}}J_{R}''(t) dt =J'_{R}(t_{2}) - J'_{R}(t_{1}).
\end{align*}
In a similar way to Step 2 of the proof of Lemma 6.7 in \cite{DuRo10}, we have 
\begin{align*}
	|J'_{R}(t)| \lesssim R |\mu(u(t))|.
\end{align*}
Thus we get
\begin{align*}
	\int_{t_{1}}^{t_{2}} \mu(u(t)) dt  \gtrsim - R\{ |\mu(u(t_{1}))|+ |\mu(u(t_{2}))|\}.
\end{align*}
Since $\mu (u(t))<0$, we obtain the desired estimates. 
\end{proof}

\begin{lemma}
\label{lem5.8}
There exists a constant $C>0$ such that 
\begin{align*}
	|X(t_{2}) - X(t_{1})| \leq C \int_{t_{1}}^{t_{2}} |\mu(u(t))| dt
\end{align*}
for any $t_{2},t_{1}$ with $t_{2} \geq t_{1}+1$. 
\end{lemma}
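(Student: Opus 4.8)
The plan is to control the displacement of the modulation/translation parameter $X(t)$ by the quantity $\mu(u(t))$, mimicking the corresponding estimate in \cite{DuRo10,CFR20} (their Lemma 6.8-type statement). The strategy is to extract the motion of $X(t)$ from a momentum-type quantity built from the localized virial functional, and then estimate the time derivative of that quantity using the localized virial identity together with the smallness of the error term $A_R$ (Lemma \ref{lem5.6}) and the modulation bounds (Lemma \ref{lem3.3}). The key point is that when $t$ is large (so that we are inside $I_{\mu_0}$), $u(t,\cdot+x(t))$ is close to $Q$ up to phase and the orthogonality conditions of Lemma \ref{lem3.2} hold, which pin down $x'(t)$; when $t$ is not in $I_{\mu_0}$ there is nothing to prove since $|\mu(u(t))|$ is bounded below there and the hypothesis $t_2\geq t_1+1$ gives room.

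\textbf{Step 1.} Reduce to the case where $[t_1,t_2]\subset I_{\mu_0}$ (equivalently $|\mu(u(t))|<\mu_0$ on $[t_1,t_2]$). Indeed, by compactness of $\{u(t,\cdot+X(t))\}$ the trajectory is a continuous path in $H^1$ modulo translation, so $X$ can be taken continuous, and $|X(t)-X(s)|$ is bounded by a fixed constant whenever $|t-s|\leq 1$; moreover, on the complement of $I_{\mu_0}$, $|\mu(u(t))|\geq \mu_0$, so the right-hand side dominates any such unit-length excursion. Splitting $[t_1,t_2]$ into the part inside $I_{\mu_0}$ and a part of controlled length outside, it suffices to treat a subinterval lying entirely in $I_{\mu_0}$.

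\textbf{Step 2.} On such a subinterval, write $u(t,x+x(t)) = e^{i\theta(t)+it}(Q+g(t))$ as in Section \ref{sec3}. Consider the localized virial first-derivative-type quantity (a localized momentum)
\begin{align*}
	V_R(t) := \int_{\mathbb{R}^d} \nabla\varphi_R(x-X(t))\cdot \im\big(\overline{u(t,x)}\,\nabla u(t,x)\big)\,dx,
\end{align*}
or equivalently track $\frac{d}{dt} X(t)$ directly via differentiating the orthogonality relation $\re\int g(t)\,\nabla Q\,dx = 0$ in Lemma \ref{lem3.2}. Differentiating this relation in $t$, substituting the equation for $g$, and using Lemma \ref{lem3.3} to bound $\rho', \theta'$ and the nonlinear/remainder terms by $O(|\mu(u(t))|)$, one obtains $|x'(t)| \lesssim |\mu(u(t))|$ — this is already recorded in Lemma \ref{lem3.3}. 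Integrating, $|x(t_2)-x(t_1)| \leq \int_{t_1}^{t_2}|x'(t)|\,dt \lesssim \int_{t_1}^{t_2}|\mu(u(t))|\,dt$, and since $X=x$ on $I_{\mu_0}$ this is the claim on that subinterval.

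\textbf{Step 3.} Assemble the pieces. Combine the interior estimate from Step 2 with the boundary excursions from Step 1; the condition $t_2\geq t_1+1$ guarantees the ``outside $I_{\mu_0}$'' contribution to $|X(t_2)-X(t_1)|$ is controlled by a constant times $\int_{t_1}^{t_2}|\mu(u(t))|\,dt$ because $|\mu|\geq\mu_0$ there over a total time of positive measure. The main obstacle I anticipate is not any single estimate but rather the bookkeeping at the interface between $I_{\mu_0}$ and its complement: one must ensure $X$ is genuinely continuous across the switch from $\widetilde{x}$ to $x$ (which follows from the uniqueness/continuity in the modulation lemma together with the compactness of the orbit, exactly as in the remark after Proposition \ref{prop5.1} citing \cite{DuRo10}), and one must handle the boundary terms $J'_R(t_i)$-type contributions that are not literally inside $I_{\mu_0}$. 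Once continuity of $X$ and the estimate $|x'(t)|\lesssim|\mu(u(t))|$ from Lemma \ref{lem3.3} are in hand, the proof is essentially a one-line integration plus the splitting argument.
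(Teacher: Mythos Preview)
Your Step 2 is correct on $I_{\mu_0}$: the modulation bound $|x'(t)|\lesssim|\mu(u(t))|$ from Lemma~\ref{lem3.3} integrates directly. The gap is in Steps~1 and~3. You assert that the contribution from $[t_1,t_2]\cap I_{\mu_0}^c$ is controlled because $|\mu|\geq\mu_0$ there and $|X(t)-X(s)|\leq C_0$ whenever $|t-s|\leq 1$; but these two facts alone do not yield the conclusion. The set $[t_1,t_2]\cap I_{\mu_0}^c$ may split into arbitrarily many short components $(c_l,d_l)$, each contributing up to $C_0$ to the total displacement via the unit-step bound while contributing only $\mu_0(d_l-c_l)$ to $\int|\mu|$. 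The displacement-to-integral ratio over the union of such components is then unbounded, and the hypothesis $t_2\geq t_1+1$ does not prevent this. Nothing in your outline bounds the number of components, and the ``or equivalently'' between the $V_R$ route and the modulation route in Step~2 hides exactly this issue: they are not equivalent off $I_{\mu_0}$.

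The argument the paper invokes (Lemma~6.8 of \cite{DuRo10}) avoids this by working with a single truncated center-of-mass functional $z_R(t)=\int \chi(x/R)\,x\,|u(t,x)|^2\,dx$ rather than with $X(t)$ directly. One first proves $P(u)=0$ (via Galilean invariance: a nonzero momentum could be boosted away, dropping the mass--energy product strictly below threshold), so that $z_R'(t)$ reduces to a tail of the momentum density supported near $|x|\sim R$. For $t\in I_{\mu_0}$, writing $u=e^{i\theta+it}(Q+g)$ with $Q$ real gives a momentum density of size $O(\|g\|_{H^1})=O(|\mu|)$; for $t\notin I_{\mu_0}$ the crude bound on the tail is absorbed since $|\mu|\geq\mu_0$. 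Hence $|z_R'(t)|\lesssim|\mu(u(t))|$ for \emph{every} $t\in[t_1,t_2]$ once $R\gtrsim 1+\sup_{[t_1,t_2]}|X|$, and a single integration gives $|z_R(t_2)-z_R(t_1)|\lesssim\int_{t_1}^{t_2}|\mu|$; compactness then converts $z_R$ back to $X$ at the two endpoints only. The key difference from your outline is that the in/out-$I_{\mu_0}$ dichotomy is applied pointwise to the derivative of one functional, so no summation over components of $I_{\mu_0}^c$ is needed. You never invoke $P(u)=0$, and without it the localized-momentum route you allude to does not close either.
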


\begin{proof}
See Lemma 6.8 in \cite{DuRo10}. 
\end{proof}

%
%

\begin{lemma}
\label{lem5.10}
There exists a time sequence $\{t_{n}\}$ with $t_{n} \to \infty$ such that $\mu(u(t_{n})) \ceq K(u(t_{n})) \to 0$ as $n \to \infty$. 
\end{lemma}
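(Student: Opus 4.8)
The plan is to argue by contradiction: suppose that $\mu(u(t))$ stays bounded away from $0$ for all large $t$, i.e. there exist $\delta>0$ and $T_0>0$ with $|\mu(u(t))|\geq\delta$ for all $t\geq T_0$. Since $K(u(t))<0$ throughout the existence interval and (by the Pohozaev identity and $E(u)=E(Q)$) $K(u(t))=\frac{d(p-1)-4}{4}\mu(u(t))$ with $\mu(u(t))<0$, this lower bound on $|\mu|$ means $K(u(t))\leq -c\delta<0$ for all $t\geq T_0$. In other words, we would be back in the ``below threshold'' situation $\sup_{t>T_0}K(u(t))<0$, and I want to rederive the Akahori--Nawa type contradiction from the localized virial identity, now that we have the compactness property established in Section~5.1.

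Concretely, first I would fix $\varepsilon>0$ small and invoke Lemma~\ref{lem5.6} to bound $|A_R(u(t))|\leq C\varepsilon|\mu(u(t))|$ for $R\geq R_\varepsilon+|X(t)|$; since $K(u(t))\leq -c\delta$, for $\varepsilon$ small enough the localized virial identity gives $J_R''(t)\leq 4K(u(t))+A_R(u(t))\leq -c'\delta<0$ uniformly in $t\geq T_0$, provided $R\geq R_\varepsilon+|X(t)|$. The remaining issue is that $X(t)$ might be unbounded, so one cannot fix a single $R$. Here I would use Lemma~\ref{lem5.7} together with the assumed lower bound $|\mu(u(t))|\geq\delta$: the left side of Lemma~\ref{lem5.7} is $\gtrsim\delta(t_2-t_1)$, so that lemma forces $\sup_{t_1<t<t_2}|X(t)|\gtrsim \delta(t_2-t_1)/\{|\mu(u(t_1))|+|\mu(u(t_2))|\}\gtrsim \delta(t_2-t_1)/\mathbf{A}'$ where $\mathbf{A}'$ bounds $|\mu|$ from above (which holds since $\|\nabla u(t)\|_{L^2}$ is bounded). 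Actually the cleaner route is: combine Lemma~\ref{lem5.7} and Lemma~\ref{lem5.8}. Setting $f(t):=\sup_{T_0\leq s\leq t}|X(s)|$, Lemma~\ref{lem5.8} with the monotonicity gives $|X(t)|\lesssim |X(T_0)|+\int_{T_0}^t|\mu(u(s))|ds$, and then Lemma~\ref{lem5.7} gives $\int_{T_0}^t|\mu(u(s))|ds\lesssim (1+f(t))\{|\mu(u(T_0))|+|\mu(u(t))|\}\lesssim 1+f(t)$. So $f(t)\lesssim 1+f(t)$ — wait, that's vacuous; instead I track that $\int_{T_0}^t|\mu|\,ds\geq \delta(t-T_0)$ while also $\int_{T_0}^t|\mu|\,ds\lesssim 1+\sup_{[T_0,t]}|X|$, hence $\sup_{[T_0,t]}|X|\gtrsim \delta(t-T_0)$, i.e. $|X|$ grows at least linearly along some sequence.

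Now I play this growth off against Lemma~\ref{lem5.8} in the other direction: Lemma~\ref{lem5.8} says $|X(t_2)-X(t_1)|\leq C\int_{t_1}^{t_2}|\mu(u(s))|\,ds$, and since $|\mu|$ is bounded above, $|X(t_2)-X(t_1)|\leq C\mathbf{A}'(t_2-t_1)$, so $|X|$ grows at most linearly — consistent, so that alone is not the contradiction. The real contradiction must come from the virial quantity $J_R$ being nonnegative (it is, since $\varphi_R\geq0$) while $J_R''\leq -c'\delta<0$ on a long interval would force $J_R$ negative. The subtlety is matching the window length to the requirement $R\geq R_\varepsilon+\sup|X|$. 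Pick a large interval $[t_1,t_1+L]$, set $R=R_\varepsilon+\sup_{[t_1,t_1+L]}|X(t)|\lesssim R_\varepsilon+|X(t_1)|+\mathbf{A}'L$. On this interval $J_R''(t)\leq -c'\delta$, so $J_R(t_1+L)\leq J_R(t_1)+J_R'(t_1)L-\frac{c'\delta}{2}L^2$. Using $0\leq J_R(t_1)\lesssim R^2\mathbf{A}'\lesssim (R_\varepsilon+|X(t_1)|+\mathbf{A}'L)^2$ and $|J_R'(t_1)|\lesssim R|\mu(u(t_1))|\lesssim R\mathbf{A}'$ (the estimate from Step 2 of Lemma~6.7 in \cite{DuRo10}, as used in Lemma~\ref{lem5.7}), positivity of $J_R(t_1+L)$ demands $\frac{c'\delta}{2}L^2\lesssim (R_\varepsilon+|X(t_1)|+\mathbf{A}'L)^2+ (R_\varepsilon+|X(t_1)|+\mathbf{A}'L)\mathbf{A}'L$. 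This is \emph{not} immediately contradictory because of the $|X(t_1)|$ and $\mathbf{A}'L$ terms; the way out — exactly as in the below-threshold argument of Akahori--Nawa and in \cite{DuRo10,CFR20} — is to note that the compactness forces $|X(t)|=o(t)$, so choosing $t_1$ large and then $L$ a suitable large multiple of $t_1$ (or using the sublinearity $|X(t)|/t\to 0$) makes the right side $o(L^2)$, contradicting the $L^2$ lower bound on the left. The main obstacle, and the step I would spend the most care on, is precisely establishing and exploiting this sublinear growth $|X(t)|=o(t)$ from the precompactness of $\{u(t,\cdot+X(t))\}$ together with Lemma~\ref{lem5.8} — this is the standard ``the center of mass of a compact orbit cannot travel at linear speed without the energy/virial contradiction'' mechanism, and once it is in hand the contradiction with $|\mu(u(t))|\geq\delta$ is immediate, yielding a sequence $t_n\to\infty$ with $\mu(u(t_n))\to 0$, hence $K(u(t_n))\to 0$.
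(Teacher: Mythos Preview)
The paper's proof is a one-sentence citation: if $\sup_{t>0}K(u(t))\leq-\delta<0$ for some $\delta>0$, then by \cite{AkNa13,DWZ16} the solution must grow up, contradicting the standing hypothesis $\mathbf{A}=\sup_{t>0}\|\nabla u(t)\|_{L^2}^2<\infty$. Those references obtain this directly from the localized virial identity and the uniform $H^1$ bound --- no compactness, no control of $X(t)$, no momentum condition is needed.

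Your route is genuinely different and more involved: you try to feed the compactness of Section~5.1 back through Lemmas~\ref{lem5.6}--\ref{lem5.8} and close a virial contradiction on windows $[t_1,t_1+L]$. The step you yourself flag as the main obstacle, $|X(t)|=o(t)$, is a real gap. Precompactness of the orbit alone does not yield sublinear growth of the spatial center (a Galilean-boosted soliton has a precompact $H^1$ orbit and $X(t)$ linear in $t$), and Lemma~\ref{lem5.8} under your contradiction hypothesis $|\mu|\geq\delta$ only gives the linear bound $|X(t_2)-X(t_1)|\lesssim \mathbf{A}'(t_2-t_1)$. Without $|X(t)|=o(t)$ your final inequality
\[
\tfrac{c'\delta}{2}L^2\;\lesssim\;(R_\varepsilon+|X(t_1)|+\mathbf{A}'L)^2+(R_\varepsilon+|X(t_1)|+\mathbf{A}'L)\mathbf{A}'L
\]
is never contradictory, since both sides are of order $L^2$. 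To make your argument work you would first have to reduce to $P(u)=0$ (if $P(u)\neq0$, a Galilean boost drops the energy strictly below $E(Q)$, and then \cite{AkNa13,DWZ16} already apply) and then run the standard ``zero momentum $+$ compactness $\Rightarrow |X(t)|/t\to0$'' argument; only after that does your virial window argument, or more simply the combination of Lemma~\ref{lem5.7} with $|X(t)|=o(t)$, close. All of this is possible but circuitous --- you would be rebuilding, with extra machinery, exactly the below-threshold grow-up result that the paper simply quotes.
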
 

\begin{proof}
By \cite{AkNa13,DWZ16}, if there exists $\delta>0$ such that $\sup_{t>0}K(u(t))<-\delta$, then $u$ must grow up in the positive time direction, a contradiction. 
\end{proof}

\begin{lemma}
There exists $\theta_{0}\in\mathbb{R}$, $x_{0}\in\mathbb{R}^{d}$, and $c>0$ such that 
\begin{align*}
	\|u-e^{it+i\theta_{0}} Q(\cdot -x_{0})\|_{H^{1}} \leq e^{-ct}
\end{align*}
for all $t>0$. 
\end{lemma}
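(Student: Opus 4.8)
The goal is exponential convergence of $u$ to a ground-state orbit, so the natural strategy is to run a bootstrap/ODE argument on $\mu(u(t))$ using the integral estimates already established. First I would show that $u(t)$ enters and stays in the modulation regime $I_{\mu_0}$ for large $t$: by Lemma \ref{lem5.10} there is a sequence $t_n\to\infty$ with $\mu(u(t_n))\to 0$, and combining the precompactness of $\{u(t,\cdot+X(t))\}$ with the characterization of $Q$ (part (2) of the variational lemma, i.e.\ $K(f)=0,\ M(f)=M(Q),\ E(f)=E(Q)$ forces $f$ to be a translated/modulated $Q$) one concludes that $\mu(u(t))\to 0$ along the \emph{full} flow, not just the sequence. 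Indeed if $\mu(u(s_n))\not\to 0$ for some $s_n\to\infty$, extracting a convergent subsequence of $u(s_n,\cdot+X(s_n))$ gives a limit $\psi$ with $M(\psi)=M(Q)$, $E(\psi)=E(Q)$, and $K(\psi)\le 0$ but $\mu(\psi)\ne 0$ — and since we can find times where $\mu$ is small (Lemma \ref{lem5.10}), an intermediate-value/continuity argument together with $K(u(t))<0$ on the whole interval (part (3) of that lemma) funnels $\mu(u(t))$ to $0$. Hence for $t\ge T_0$ we have $|\mu(u(t))|<\mu_0$ and the modulation parameters $(x(t),\theta(t))$, $\rho(t)$, $g(t)$, $h(t)$ of Section \ref{sec3} are defined.

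Next I would upgrade the $L^1$-in-time control. Lemma \ref{lem5.7} gives
\[
	\int_{t_1}^{t_2}|\mu(u(t))|\,dt \lesssim \big(1+\sup_{t_1<t<t_2}|X(t)|\big)\big\{|\mu(u(t_1))|+|\mu(u(t_2))|\big\},
\]
and Lemma \ref{lem5.8} gives $|X(t_2)-X(t_1)|\lesssim \int_{t_1}^{t_2}|\mu(u(t))|\,dt$. The key observation is that since $\mu(u(t))\to 0$, the right-hand side of Lemma \ref{lem5.7} with $t_2\to\infty$ forces $\int_t^\infty|\mu(u(s))|\,ds<\infty$ once we control the growth of $|X|$; feeding Lemma \ref{lem5.8} back shows $X(t)$ converges to some $x_0$ as $t\to\infty$, so in particular $\sup_{t>s}|X(t)|$ is bounded. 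Plugging this boundedness back into Lemma \ref{lem5.7} yields the clean tail bound
\[
	\int_{t}^{\infty}|\mu(u(s))|\,ds \lesssim |\mu(u(t))|, \qquad t\ge T_0.
\]
Writing $f(t):=\int_t^\infty|\mu(u(s))|\,ds$, this reads $f(t)\lesssim -f'(t)$, i.e.\ $f(t)\le Ce^{-ct}$ by Gronwall, and then $|\mu(u(t))|$ is integrated-exponentially small; combined with the (almost) monotonicity coming from $J_R''<0$ and Lemma \ref{lem3.3}'s equivalences, one upgrades this to the pointwise bound $|\mu(u(t))|\le C e^{-ct}$.

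Finally, Lemma \ref{lem3.3} gives $\|g(t)\|_{H^1}\approx|\mu(u(t))|\le Ce^{-ct}$, i.e.\ $\|e^{-i\theta(t)}u(t,\cdot+x(t))-Q\|_{H^1}\le Ce^{-ct}$; and the parameter estimates $|x'(t)|+|\theta'(t)|\lesssim|\mu(u(t))|\le Ce^{-ct}$ are integrable, so $x(t)\to x_0$ and $\theta(t)-t\to\theta_0$ for some limits, with $|x(t)-x_0|+|\theta(t)-t-\theta_0|\le Ce^{-ct}$. Combining the two via the triangle inequality and the (uniform in the relevant range) continuity of translations and phase multiplications on the precompact set $\{u(t,\cdot+X(t))\}$ in $H^1$ yields $\|u(t)-e^{it+i\theta_0}Q(\cdot-x_0)\|_{H^1}\le Ce^{-ct}$, and absorbing the constant $C$ into the exponent (enlarging $c$ is harmless, or shifting $t$) gives the stated bound with constant $1$. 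The main obstacle I anticipate is the first step — promoting $\mu(u(t_n))\to 0$ along a sequence to convergence along the whole flow — since it must simultaneously juggle the precompactness, the rigidity characterization of $Q$, the sign condition $K(u(t))<0$, and the fact that $X(t)$ is a priori only a selected (not obviously continuous) center; the cleanest route is probably a contradiction argument extracting two subsequences where $\mu$ is small and not small, and using Lemma \ref{lem5.7} plus the convergence of $X$ to squeeze out the oscillation.
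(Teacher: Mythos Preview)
Your proposal follows the same overall route as the paper (which simply defers to Duyckaerts--Roudenko, Proposition~6.1): combine Lemmas~\ref{lem5.7}, \ref{lem5.8}, \ref{lem5.10} and the modulation estimates of Lemma~\ref{lem3.3} into a Gronwall argument for $f(t)=\int_t^\infty|\mu(u(s))|\,ds$, then integrate the parameter bounds to freeze $(x(t),\theta(t))$. The final two paragraphs of your sketch are exactly right (modulo a slip: with the decomposition $u(t,\cdot+x(t))=e^{i\theta(t)+it}(Q+g)$ and $|\theta'(t)|\lesssim|\mu|$, it is $\theta(t)\to\theta_0$, not $\theta(t)-t\to\theta_0$).

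The gap is in your first step. Showing $\mu(u(t))\to 0$ along the full flow by precompactness alone does not work as you sketch it: extracting a subsequential limit $\psi$ of $u(s_n,\cdot+X(s_n))$ with $M(\psi)=M(Q)$, $E(\psi)=E(Q)$, $K(\psi)<0$ and $\mu(\psi)\neq 0$ is \emph{not} a contradiction --- $\psi$ simply has the same invariants as $u_0$ itself. The vague ``intermediate-value/continuity'' clause does not close this, and your proposed fallback (``using Lemma~\ref{lem5.7} plus the convergence of $X$'') is circular, since convergence of $X$ is exactly what you derive later from $\mu\to 0$.

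The correct order (and what Duyckaerts--Roudenko actually do) is to bound $X$ \emph{first}, using only the sequence $t_n$ from Lemma~\ref{lem5.10}. For any $\tau\in[t_{n_0}+1,t_{n_1}]$, Lemma~\ref{lem5.8} and then Lemma~\ref{lem5.7} on $[t_{n_0},t_{n_1}]$ give
\[
|X(\tau)-X(t_{n_0})|\le C\int_{t_{n_0}}^{t_{n_1}}|\mu|\le C\Big(1+\sup_{[t_{n_0},t_{n_1}]}|X|\Big)\big(|\mu(u(t_{n_0}))|+|\mu(u(t_{n_1}))|\big).
\]
Choosing $n_0,n_1$ so that the last factor is small enough to absorb the $\sup|X|$ on the right yields a bound on $\sup_{[t_{n_0},t_{n_1}]}|X|$ independent of $n_1$, hence $X$ is bounded on $[t_{n_0},\infty)$. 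With $X$ bounded, Lemma~\ref{lem5.7} with $t_2=t_n\to\infty$ gives $\int_t^\infty|\mu|\lesssim|\mu(u(t))|$, and the Gronwall step and parameter convergence follow exactly as in your second and third paragraphs. In particular $\mu(u(t))\to 0$ is then a \emph{consequence} of the exponential bound, not a prerequisite.
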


\begin{proof}
By Lemmas \ref{lem5.7}--\ref{lem5.10} and the modulation argument in Section \ref{sec3}, we obtain the statement in the same way as in the proof of Proposition 6.1 in \cite{DuRo10}. 
\end{proof}

At last, we prove that $u$ has finite variance. This argument is similar to the argument in Section 5.2 in \cite{DuRo10}, where they show that radial solutions satisfying \eqref{MEN} are of finite variance. We use the convergence to the ground state instead of radial symmetry. Finite variance of $u$ implies that $u=Q^{+}$ and it also gives us Theorem \ref{thm1.1}. 

\begin{lemma}
\label{lem5.12}
The solution $u$ satisfies $\int |x|^{2}|u_{0}(x)|^{2}dx<\infty$.
\end{lemma}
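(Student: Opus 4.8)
The plan is to exploit the known exponential convergence $\|u(t)-e^{it+i\theta_0}Q(\cdot-x_0)\|_{H^1}\le e^{-ct}$ together with the negativity of the virial functional $K$ and the localized virial identity to control the growth of $X(t)$, and then bootstrap this into a finite bound on $\int\varphi_R|u|^2\,dx$ as $R\to\infty$. First I would observe that the convergence lemma gives $|\mu(u(t))|=\|\nabla Q\|_{L^2}^2-\|\nabla u(t)\|_{L^2}^2$, and hence $|K(u(t))|$, decaying like $e^{-2ct}$ (using $|\mu(u(t))|\approx\|g(t)\|_{H^1}\lesssim e^{-ct}$ from Lemma~\ref{lem3.3} and the relation $K(u)=\frac{d(p-1)-4}{4}\mu(u)$). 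Feeding this into Lemma~\ref{lem5.7} and Lemma~\ref{lem5.8}, I get that $|X(t_2)-X(t_1)|\lesssim(1+\sup_{t_1\le t\le t_2}|X(t)|)(e^{-2ct_1}+e^{-2ct_2})$ for $t_2\ge t_1+1$; a Gronwall-type / continuity argument in $t$ then shows $\sup_{t>0}|X(t)|<\infty$, i.e. the translation parameter stays in a bounded region. (One must start from a time $t_*$ large enough that the right side is a small multiple of $(1+\sup|X|)$, then iterate.)

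Once $X$ is bounded, say $|X(t)|\le M_0$ for all $t>0$, fix $R$ and consider $J_R(t)=\int\varphi_R|u(t,x)|^2\,dx$. For $R\ge R_\varepsilon+M_0$ we have from Lemma~\ref{lem5.6} that $|A_R(u(t))|\lesssim\varepsilon|\mu(u(t))|\lesssim\varepsilon e^{-2ct}$, and $8K(u(t))\lesssim -|\mu(u(t))|$ but also $|K(u(t))|\lesssim e^{-2ct}$, so $|J_R''(t)|\lesssim e^{-2ct}$, uniformly in such $R$. Integrating twice in time from $0$, $J_R(t)=J_R(0)+J_R'(0)t+\int_0^t(t-s)J_R''(s)\,ds$; the double time-integral of $e^{-2cs}$ converges, but the linear-in-$t$ term $J_R'(0)t$ is the obstruction. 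However $J_R'(0)=2\,\im\int\nabla\varphi_R\,\nabla u_0\,\overline{u_0}\,dx$, and since $K(u_0)<0$ means $u$ blows up or grows up unless the trajectory is compact (which it is here), one expects $J_R'(0)$ to have a sign or to be controllable; more precisely, the argument in Section~5.2 of \cite{DuRo10} uses that $J_R''(t)\le -c<0$ would force finite-time blow-up, so since $u$ is global and bounded the only way out is $J_R'(0)$ appropriately signed and the quantity $\limsup_{R\to\infty}J_R(t)$ finite. I would follow \cite{DuRo10}: show that $t\mapsto J_R(t)$ being concave with a definite negative upper bound on $J_R''$ on a long interval contradicts global existence, which pins down $J_R'(0)$, and then let $R\to\infty$ using monotone convergence to conclude $\int|x|^2|u_0|^2\,dx<\infty$.

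Concretely, the key steps in order are: (i) derive the exponential decay $|\mu(u(t))|\lesssim e^{-2ct}$ from the convergence lemma and Lemma~\ref{lem3.3}; (ii) combine Lemmas~\ref{lem5.7} and \ref{lem5.8} with this decay to get $\sup_{t>0}|X(t)|<\infty$; (iii) with $X$ bounded and $R$ large, bound $|J_R''(t)|\lesssim e^{-2ct}$ uniformly in $R$ via Lemma~\ref{lem5.6}; (iv) adapt the Section~5.2 argument of \cite{DuRo10} — using that a uniform negative upper bound on $J_R''$ over a long time interval would contradict global existence/uniform boundedness of $\|\nabla u\|_{L^2}$ — to control $J_R(t)$ uniformly in $R$; (v) let $R\to\infty$ and use $\varphi_R\uparrow|x|^2$ to conclude $\int|x|^2|u_0|^2\,dx<\infty$.

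The main obstacle is step (ii): closing the bootstrap for the boundedness of $X(t)$. Lemmas~\ref{lem5.7} and \ref{lem5.8} give estimates of the form $|X(t_2)-X(t_1)|\lesssim(1+\sup_{[t_1,t_2]}|X|)(|\mu(u(t_1))|+|\mu(u(t_2))|)$, which is only useful once $|\mu(u(t))|$ is genuinely small — and \emph{a priori} we only know $\mu(u(t_n))\to0$ along a sequence (Lemma~\ref{lem5.10}), not $\mu(u(t))\to0$ for all $t$. So the logical order matters: one must first upgrade Lemma~\ref{lem5.10} to full-time decay using the convergence-to-$Q$ lemma (whose proof already uses Lemmas~\ref{lem5.7}–\ref{lem5.10}), and only then run the $X$-boundedness bootstrap. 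I expect the delicate point is making sure this does not become circular — i.e., checking that the convergence lemma is established independently of $\sup|X|<\infty$, so that the decay of $\mu$ is available before one argues boundedness of $X$. Once that ordering is in place, steps (iii)–(v) are essentially the finite-variance argument of \cite[Section~5.2]{DuRo10} with ``bounded translation parameter'' playing the role of ``radial symmetry'' in localizing the virial quantities.
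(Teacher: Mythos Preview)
Your overall strategy is in the right family, but you add an unnecessary detour and partly misidentify the mechanics of the concavity step.

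\textbf{The detour.} The paper never bounds $\sup_{t}|X(t)|$ via Lemmas~\ref{lem5.7}--\ref{lem5.8}. Once the convergence lemma gives $\|u(t)-e^{it+i\theta_0}Q(\cdot-x_0)\|_{H^1}\le e^{-ct}$ with a \emph{fixed} center $x_0$, the solution is already localized near $x_0$ for large $t$: for $t>T_\varepsilon$ and $R>R_\varepsilon$ (depending only on $\varepsilon$ and $x_0$) one has $\int_{|x|>R}(|\nabla u|^2+|u|^2)\le\varepsilon$ directly by comparing to $Q(\cdot-x_0)$. This gives $|A_R(u(t))|\lesssim\varepsilon|\mu(u(t))|$ with $X\equiv 0$ in Lemma~\ref{lem5.6}, so your step~(ii) and the circularity worry are simply absent from the paper's argument.

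\textbf{The concavity step.} In your step~(iv) there is no ``uniform negative upper bound on $J_R''$''; indeed $J_R''(t)\to 0$ because $\mu(u(t))\to 0$. The contradiction is not with global existence of $u$ but with nonnegativity of $J_R$. The paper argues: for $t>T_\varepsilon$ and $R>R_\varepsilon$ one has $J_R''(t)\lesssim\mu(u(t))<0$ strictly, so $J_R'$ is strictly decreasing; if $J_R'(t_1)\le 0$ at some $t_1>T_\varepsilon$ then $J_R'(t)\le J_R'(t_2)<0$ for all $t\ge t_2>t_1$, forcing $J_R\to-\infty$, which is impossible. Hence $J_R'>0$ on $(T_\varepsilon,\infty)$ and $J_R(T_\varepsilon)\le\limsup_{t\to\infty}J_R(t)$.

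\textbf{The missing ingredient.} Your outline does not say how to bound that $\limsup$ uniformly in $R$, and this is where the convergence is used a second time: the paper writes
\[
\int\varphi_R|u(t)|^2\,dx\;\lesssim\; R^2\|u(t)-e^{it+i\theta_0}Q(\cdot-x_0)\|_{L^2}^2+\int|x|^2|Q(x-x_0)|^2\,dx,
\]
sends $t\to\infty$ with $R$ fixed (the first term vanishes), obtains $J_R(T_\varepsilon)\le C:=\int|x|^2|Q(\cdot-x_0)|^2$ independent of $R$, and then lets $R\to\infty$ via Fatou. Your steps (iii)--(iv), which emphasize the decay rate $|J_R''|\lesssim e^{-2ct}$ and a Taylor expansion $J_R(t)=J_R(0)+J_R'(0)t+\int_0^t(t-s)J_R''(s)\,ds$, do not by themselves produce an $R$-independent bound; the sign of $J_R''$ and the comparison of $J_R(t)$ with $\int|x|^2|Q|^2$ are what close the argument.
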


\begin{proof}
%
Let $\varepsilon>0$ be small. For any $R > 0$,
\begin{align*}
	\int_{|x|>R} |\partial_{x}u(t,x)|^{2} + |u(t,x)|^{2} dx 
	&\lesssim \|u(t)-e^{it+i\theta_{0}} Q(\cdot -x_{0})\|_{H^{1}}^{2}
	\\
	&\quad + \int_{|x|>R} |\partial_{x}Q(x-x_{0})|^{2} + |Q(x-x_{0})|^{2} dx
\end{align*}
Thus taking $T_{\varepsilon}>0$ and $R_{\varepsilon}>0$ sufficiently large, for $t > T_\eps$ and $R > R_\eps$ we have
\begin{align*}
	\|u(t)-e^{it+i\theta_{0}} Q(\cdot -x_{0})\|_{H^{1}}^{2} \leq \frac{C^{-1}}{2}\varepsilon
\end{align*}
and 
\begin{align*}
	\int_{|x|>R} |\partial_{x}Q(x-x_{0})|^{2} + |Q(x-x_{0})|^{2} dx
	\leq  \frac{C^{-1}}{2}\varepsilon.
\end{align*}
Therefore, we get
\begin{align*}
	\int_{|x|>R} |\partial_{x}u(t,x)|^{2} + |u(t,x)|^{2} dx 
	\leq \frac{1}{2} \varepsilon + \frac{1}{2} \varepsilon =\varepsilon
\end{align*}
for any $t>T_{\varepsilon}$ and $R>R_{\varepsilon}$. 
By Lemma \ref{lem5.6} as $X(t)\equiv 0$, we get
\begin{align*}
	|A_{R}(u(t))|\lesssim \varepsilon |\mu(u(t))|
\end{align*}
for  $t>T_\eps$ and $R>R_\eps$
(choosing these larger if necessary). 
It follows from this estimate and the localized virial identity that
\begin{align*}
	J_{R}''(t)=4K(u(t)) + A_{R}(u(t))
	\leq c \mu(u(t)) - \varepsilon \mu(u(t)) \lesssim \mu(u(t))
\end{align*}
for $t>T_\varepsilon$, where we note that $\mu(u(t))<0$. 
As a consequence, we find that $J_{R}'(t)>0$ for $t>T_\varepsilon$. Indeed, if not, there exists a time $t_{1}>T_\varepsilon$ such that $J_{R}'(t_{1})\leq 0$. Since $J_{R}''(t)<0$ for $t>T_\varepsilon$, $J_{R}'$ is decreasing and thus there exists $t_{2}>t_{1}$ such that $J_{R}'(t_{2})< 0$ and $J_{R}'(t)\leq  J_{R}'(t_{2})<0$ for $t\geq t_{2}$, so $J_{R}(t)$ becomes negative for large $t$, a contradiction. 
Hence we have $J_{R}'(t)>0$ for $t>T_\varepsilon$. 
It holds that 
\begin{align*}
	\int_{\mathbb{R}^d} \varphi_{R}(x)|u(T_{\varepsilon},x)|^{2}dx = J_{R}(T_{\varepsilon}) \leq J_{R}(t)
	=\int_{\mathbb{R}^d} \varphi_{R}(x)|u(t,x)|^{2}dx
\end{align*}
for all $t>T_{\varepsilon}$. 
Now we have
\begin{align*}
	\int_{\mathbb{R}^d} \varphi_{R}(x)|u(t,x)|^{2}dx 
	&\lesssim R^{2}\|u(t)-e^{it+i\theta_{0}} Q(\cdot -x_{0})\|_{L^{2}}^{2} 
	+\int_{\mathbb{R}^d} |x|^{2}|Q(x -x_{0})|^{2}dx
\end{align*}
and thus
\begin{align*}
	\limsup_{t \to \infty} \int_{\mathbb{R}^d} \varphi_{R}(x)|u(t,x)|^{2}dx
	\lesssim  \int_{\mathbb{R}^d} |x|^{2}|Q(x-x_{0})|^{2}dx
	< C.
\end{align*}
Thus, we get
\begin{align*}
	\int_{\mathbb{R}^d} \varphi_{R}(x)|u(T_{\varepsilon},x)|^{2}dx
	\leq \limsup_{t \to \infty} \int_{\mathbb{R}^d} \varphi_{R}(x)|u(t,x)|^{2}dx
	\leq C.
\end{align*}
Since $\lim_{R \to \infty} \varphi_{R}(x) =|x|^{2}$ for all $x\in \mathbb{R}^d$, it follows from the Fatou lemma that
\begin{align*}
	\int_{\mathbb{R}^d} |x|^{2}|u(T_{\varepsilon},x)|^{2}dx
	\leq \liminf_{R\to \infty} \int_{\mathbb{R}^d} \varphi_{R}(x)|u(T_{\varepsilon},x)|^{2}dx \leq C.
\end{align*}
This completes the proof. 
\end{proof}

\begin{proposition}
We have $u=Q^{+}$ up to symmetries (except for time reversal). 
\end{proposition}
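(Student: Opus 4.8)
The plan is to deduce the proposition from the established rigidity of finite-variance threshold solutions; all of the analytic work has already been carried out in the preceding lemmas of this section, and only a verification of hypotheses and a citation remain. Recall the standing assumptions here: $u_{0}$ satisfies \eqref{MEN}, $u$ exists on $[0,\infty)$, and $\mathbf{A}=\sup_{t>0}\|\nabla u(t)\|_{L^{2}}^{2}<\infty$; moreover Lemma \ref{lem5.12} has shown that $u$ has finite variance, i.e.\ $|x|u_{0}\in L^{2}(\mathbb{R}^{d})$.

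First I would check that $u_{0}$ satisfies the hypotheses of part (3) of the theorem on global dynamics of threshold solutions (Duyckaerts--Roudenko \cite{DuRo10} and Campos--Farah--Roudenko \cite{CFR20}). The mass-energy identity $M(u_{0})^{\frac{1-s_{c}}{s_{c}}}E(u_{0})=M(Q)^{\frac{1-s_{c}}{s_{c}}}E(Q)$ is immediate from $M(u_{0})=M(Q)$ and $E(u_{0})=E(Q)$. Next, by the Remark following Theorem \ref{thm1.1} (equivalently, from the identity $K=\frac{d(p-1)-4}{4}\mu$ recorded in Section \ref{sec3} together with $d(p-1)>4$), the hypothesis $K(u_{0})<0$ under \eqref{MEN} is equivalent to $\|\nabla Q\|_{L^{2}}<\|\nabla u_{0}\|_{L^{2}}$; since $\|u_{0}\|_{L^{2}}=\|Q\|_{L^{2}}$, this is precisely the gradient condition $\|u_{0}\|_{L^{2}}^{1-s_{c}}\|\nabla u_{0}\|_{L^{2}}^{s_{c}}>\|Q\|_{L^{2}}^{1-s_{c}}\|\nabla Q\|_{L^{2}}^{s_{c}}$. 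The remaining finite-variance hypothesis $|x|u_{0}\in L^{2}$ is Lemma \ref{lem5.12}, so part (3) applies in every dimension and with no symmetry assumption.

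Then I would invoke the conclusion of that theorem: either $u$ blows up in both finite positive \emph{and} finite negative time, or $u=Q^{+}$ up to symmetries of the equation. The first alternative is impossible because $u$ exists on $[0,\infty)$ by assumption, so $u=Q^{+}$ up to symmetries. To justify the parenthetical ``except for time reversal,'' I would observe that $Q^{+}$ is itself global and uniformly bounded in $\dot{H}^{1}$ forward in time (it converges to $e^{it}Q$), whereas every time-reversed image of $Q^{+}$ blows up in finite positive time; hence the symmetry transformation relating $u$ to $Q^{+}$ cannot involve time reversal. Feeding this into the case analysis behind Theorem \ref{thm1.1} --- in each time direction $u$ either blows up in finite time, grows up, or (after possibly a time reversal to place the uniformly bounded direction forward) falls into the regime treated in this section --- then produces the alternatives (1)--(5) and completes the proof of Theorem \ref{thm1.1}. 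I do not expect a genuine obstacle at this stage: all the substantive analysis --- precompactness of the orbit, convergence to the ground state, and finite variance --- has been done in the lemmas above, and the only points requiring care are the bookkeeping of which symmetries occur and the fact, already built into Lemma \ref{lem5.12}, that finite variance propagates on the compact interval $[0,T_{\varepsilon}]$ down to $t=0$.
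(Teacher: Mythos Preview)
Your proposal is correct and follows essentially the same route as the paper: invoke Lemma~\ref{lem5.12} to obtain finite variance, apply the finite-variance threshold classification of \cite{DuRo10,CFR20}, and rule out the ``blow-up in both directions'' alternative using the standing assumption that $u$ is global forward. Your additional remarks justifying the parenthetical ``except for time reversal'' and explaining how the proposition feeds into the case analysis for Theorem~\ref{thm1.1} are welcome elaborations but do not change the argument.
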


\begin{proof}
By Lemma \ref{lem5.12}, $u$ satisfies \eqref{MEN} and $|x||u_{0}| \in L^{2}$. Thus, by \cite[Theorem 1.3]{CFR20}, $u$ blows up in both time directions or $u=Q^{+}$  up to symmetries. Since $u$ is global in the positive time direction, we get $u=Q^{+}$.
\end{proof}


\section*{Acknowledgement}

Research of the first author is partially supported
by an NSERC Discovery Grant.
The second author is supported by JSPS Overseas Research Fellowship and KAKENHI Grant-in-Aid for Early-Career Scientists No. JP18K13444.


\end{document}